\newtheorem{defn}{Definition}
\newtheorem{prop}{Proposition}
\newtheorem{lemma}{Lemma}
\newtheorem{ex}{Example}
\newtheorem{rmk}{Remark}
\newtheorem{qstn}{Question}
\newtheorem{thm}{Theorem}
\newtheorem{app}{Appendix}
\newcommand{\subseq}{\subseteq}
\newcommand{\N}{\mathbb{N}}
\newcommand{\Z}{\mathbb{Z}}
\newcommand{\R}{\mathbb{R}}
\newcommand{\ds}{\displaystyle}
\newcommand{\Aut}{\text{Aut}}
\newcommand{\F}{\mathbb{F}}
\renewcommand{\O}{\mathcal{O}}
\newcommand{\tpitchfork}{%
  \vbox{
    \baselineskip\z@skip
    \lineskip-.52ex
    \lineskiplimit\maxdimen
    \m@th
    \ialign{##\crcr\hidewidth\smash{$-$}\hidewidth\crcr$\pitchfork$\crcr}
  }%
}
\renewcommand{\S}{\mathfrak{S}}
\renewcommand{\v}{\mathbf{v}}
\newcommand{\w}{\mathbf{w}}
\newcommand{\tr}{\text{tr}}
\renewcommand{\Re}{\mathbf{Re}}
\renewcommand{\Im}{\mathbf{Im}}
\newcommand{\OO}{\mathbb{O}}
\title{Oriented Steiner Triple Systems, Steiner Products, and Dynamics}
\author{Jake Kettinger and Chris Peterson}
\affil{Department of Mathematics, Colorado State University}
\date{}
\begin{document}

\maketitle

\begin{abstract}
Let $\mathfrak S$ denote a Steiner triple system on an $n$-element set. An orientation of $\mathfrak S$ is an assignment of a cyclic ordering to each of the triples in $\mathfrak S$. From an oriented Steiner triple system, one can define an anticommutative bilinear operation on $\mathbb R^n$ resembling the cross product. We call this bilinear operation a Steiner product. We classify the oriented Steiner triple systems on sets of size 7 and 9 and investigate the dynamics of their associated Steiner products.
\end{abstract}


\section{Introduction}
Steiner triple systems seem to be first introduced by Wesley Woolhouse in 1844 in a problem posed in {\it The Lady's and Gentlemen's Diary}. The problem posed by Woolhouse was solved by Thomas Kirkman in 1847 \cite{kirkman1847} and a refinement of the problem, known as {\it Kirkman's Schoolgirl Problem}, was proposed by Kirkman in 1850 (again in The Lady's and Gentlemen's Diary). In 1853, Jakob Steiner independently rediscovered the concept and described these systems with a more precise mathematical language. Steiner systems are named after him due to this work \cite{steiner1853}. Oriented Steiner triple systems, the focus of this paper, were introduced 160 years later in the 2013 paper of Strambach and Stuhl \cite{StSt}.

In this paper, we explore new applications of oriented Steiner triple systems. We use the orientation to define a binary operation, on the real vector space spanned by the elements of a Steiner triple system, which we call the \textit{Steiner product}. The Steiner product generally satisfies some of the properties of the cross product of a vector space, as shown in Proposition \ref{perp}; and in two circumstances it satisfies all the properties.

We find that different orientations on the same Steiner triple system result in qualitatively different behavior from the Steiner product. This motivates the goal of Section 3 to classify oriented Steiner triple systems by isomorphism class. We succeed in this goal for Steiner systems of sizes 7 and 9, as shown in Theorems \ref{seven} and \ref{nine}. In addition, we compute the automorphism groups of each isomorphism class of these oriented Steiner triple systems. We end Section 3 with a look into a specific oriented Steiner triple system and its connection to the octonions.

In Section 4, we analyze the dynamics of the iterated Steiner product using representatives of the isomorphism classes found in Section 3. This culminates in Theorem \ref{thmdyn}, in which we give criteria for the dimension of the vector space spanned by the set of vectors generated from two starting vectors through an iterated Steiner product.
\section{Background and Definitions}
\begin{defn}
    A \textbf{Steiner triple system} is a special kind of block design. It consists of an ordered pair $(\S,T)$ where $\S$ denotes a finite set of \textbf{points} and $T$ denotes a collection of distinguished $3$-element subsets of $\S$ called \textbf{triples}. In a Steiner triple system, the set of triples, $T$, is constrained to satisfy the requirement that each $2$-element subset of $\S$ is a subset of exactly one of the triples in $T$ \cite{LR,StSt}.
\end{defn}
\begin{defn}
    An \textbf{oriented Steiner triple system} is a Steiner triple system, $(\S,T)$, where each triple, $\{a,b,c\}\in T$, is assigned one of the two possible cyclic orderings of the elements in the triple. We denote the cyclic ordering $a\rightarrow b \rightarrow c\rightarrow a$ by $[a,b,c]$ and $a\rightarrow c \rightarrow b\rightarrow a$ by $[a,c,b]$. Note that $[a,b,c]=[b,c,a]=[c,a,b]$ and that $[a,c,b]=[c,b,a]=[b,a,c]$. An oriented Steiner triple system, supported on $(\S,T)$, will be denoted by $(\S, \mathcal{O}(T))$.
\end{defn}
\begin{defn}
    Given an oriented Steiner triple system $(\S,\mathcal{O}(T))$, suppose $t=\{x_1,x_2,x_3\} \in T$ has the orientation $[x_1,x_2,x_3] \in \mathcal{O}(T)$. The oriented triple $[x_1,x_2,x_3]\in \O(T)$ can be used to define a skew symmetric function $f_t:\{x_1,x_2,x_3\}\times \{x_1,x_2,x_3\}\to\{-1,0,1\}$ by setting $$f_t(x_1,x_1)=f_t(x_2,x_2)=f_t(x_3,x_3)=0$$ $$f_t(x_1,x_2)=f_t(x_2,x_3)=f_t(x_3,x_1)=1$$ $$f_t(x_2,x_1)=f_t(x_3,x_2)=f_t(x_1,x_3)=-1$$

    The oriented triples in $\mathcal{O}(T)$ lead to functions $f_t$ for each $t\in T$. These can be combined to give a function $f:\S\times\S\to\{-1,0,1\}$ satisfying $f|_{t\times t}=f_t$ for each $t\in T$. We call $f$ the  \textbf{orientation function} for $(\S,\O(T))$. 
\end{defn}
Note that this definition is modified from that given in \cite{StSt}, where $f(x_i,x_i)=\pm 1$. Our having $f(x_i,x_i)=0$ is necessary for the construction of the Steiner product below.
\begin{defn}
    Let $(\S,T)$ be a Steiner triple system. Denote by $\R^{\S}$ the vector space of $\R$-linear formal sums of the elements $s\in\S$. That is, $$\R^{\S}=\left\{\sum_{s\in\S}a_ss:a_s\in \R\right\}.$$ Note that $\S$ is a basis for $\R^{\S}$ and that $\dim_\R\R^{\S}=|\S |$.
    
    We make $\R^{\S}$ into an inner product space by first defining the inner product on the basis $\S$ by $$\langle s,s'\rangle = \begin{cases}
        0&s\neq s'\\
        1&s=s'\\
    \end{cases}$$ then extending linearly to all of $\R^{\S}$. We will often denote the inner product with the standard dot product notation: $\langle s,s'\rangle=s\cdot s'$.
\end{defn}

\begin{defn}\label{StP}
Let $(\S,\O(T))$ be an oriented Steiner triple system with orientation function $f$. Consider the skew symmetric binary operation $\times:\S \times \S \to \R^{\S}$ given by $$s\times s'=f(s,s')s''$$ where $\{s,s',s''\}$ form a triple (or a subset of a triple in the case $s=s'$, in which case $f(s,s')=0$ so $s''$ does not matter). 

Using this binary operation on $\S$, we extend the binary operation linearly to all of $\R^{\S}$ to define the \textbf{Steiner product} on $\R^{\S}$. In particular, if  $\mathbf{a}=\sum_{s\in\S}a_ss\hskip .1in {\rm and} \hskip .1in  \mathbf{b}=\sum_{s\in\S}b_{s}s$ then  $$\mathbf{a}\times\mathbf{b}=\sum_{s\in\S}\sum_{t\in\S}a_sb_t(s\times t).$$
\end{defn}

 \begin{prop}{\label{perp}} Consider an oriented Steiner triple system 
$(\S,\O(T))$ with associated Steiner product $\times: \R^{\S} \times \R^{\S} \to \R^{\S}$. For any $\mathbf{a},\mathbf{b} \in \R^{\S}$, we have $\mathbf{a}\cdot(\mathbf{a}\times\mathbf{b})=\mathbf{b}\cdot(\mathbf{a}\times\mathbf{b})=0$.
\end{prop}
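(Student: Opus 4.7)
The plan is to prove both equalities by direct computation, expanding $\mathbf{a}\times\mathbf{b}$ in the basis $\S$ and grouping the resulting terms triple by triple. Writing $\mathbf{a}=\sum_{s\in\S}a_ss$ and $\mathbf{b}=\sum_{t\in\S}b_tt$, bilinearity gives
$$\mathbf{a}\times\mathbf{b}=\sum_{s,t\in\S}a_sb_t\,f(s,t)\,\pi(s,t),$$
where $\pi(s,t)$ denotes the unique third point of the triple containing $s$ and $t$ (the term vanishes when $s=t$ since $f(s,s)=0$, so $\pi(s,t)$ is only relevant when $s\neq t$, where it is well-defined because $(\S,T)$ is a Steiner triple system). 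Taking the inner product with $\mathbf{a}$ and using the orthonormality of $\S$ yields
$$\mathbf{a}\cdot(\mathbf{a}\times\mathbf{b})=\sum_{s\neq t}a_{\pi(s,t)}\,a_s\,b_t\,f(s,t).$$

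The key step is to partition this sum according to triples. For each triple $\{x,y,z\}\in T$ with orientation $[x,y,z]$, exactly six ordered pairs $(s,t)$ with $s\neq t$ lie in $\{x,y,z\}\times\{x,y,z\}$. I will tabulate these six contributions explicitly: the pairs $(x,y),(y,z),(z,x)$ each contribute $+1$ times the corresponding monomial in the $a$'s and $b$'s, and the pairs $(y,x),(z,y),(x,z)$ contribute $-1$. Collecting terms by the $b$-factor (say, coefficient of $b_y$) gives $a_za_x-a_xa_z=0$, and similarly the coefficients of $b_x$ and $b_z$ vanish. Thus every triple contributes zero, so $\mathbf{a}\cdot(\mathbf{a}\times\mathbf{b})=0$.

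For $\mathbf{b}\cdot(\mathbf{a}\times\mathbf{b})$, the cleanest route is to invoke the skew-symmetry of $\times$, which follows immediately from the skew-symmetry of $f$: $\mathbf{a}\times\mathbf{b}=-\mathbf{b}\times\mathbf{a}$. Then $\mathbf{b}\cdot(\mathbf{a}\times\mathbf{b})=-\mathbf{b}\cdot(\mathbf{b}\times\mathbf{a})=0$ by the identity just proved, applied with the roles of $\mathbf{a}$ and $\mathbf{b}$ interchanged.

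There is no genuine obstacle; the only thing to be careful about is bookkeeping, namely that the six ordered pairs inside a single triple split into three sign-cancelling pairs, and that this cancellation is independent of the chosen orientation $[x,y,z]$ versus $[x,z,y]$ (a global sign change in $f$ within the triple leaves the pairwise cancellation intact). The argument uses nothing about the orientation beyond the skew-symmetry of $f$, and in particular works for any oriented Steiner triple system.
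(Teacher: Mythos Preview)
Your proof is correct and follows essentially the same approach as the paper: expand $\mathbf{a}\cdot(\mathbf{a}\times\mathbf{b})$ into monomials $f(s,t)\,a_{\pi(s,t)}a_sb_t$ and observe that they cancel in pairs using $f(s_j,s_k)=-f(s_i,s_k)$ within each triple. Your organization (grouping all six ordered pairs of a triple and checking the coefficient of each $b$-factor) is a bit tidier than the paper's pairwise matching, and you deduce the second identity from skew-symmetry of $\times$ whereas the paper just says ``in a similar manner,'' but these are cosmetic differences rather than a different route.
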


\begin{proof}
    Let $\S=\{s_1, \dots, s_n\}$ and write $\mathbf{a}=a_1s_1+\cdots+a_ns_n$. We have $$\mathbf{a}\cdot(\mathbf{a}\times\mathbf{b})=(a_1s_1+\cdots+a_ns_n)\cdot(\mathbf{a}\times\mathbf{b})=a_1s_1\cdot(\mathbf{a}\times\mathbf{b})+\cdots +a_ns_n\cdot(\mathbf{a}\times\mathbf{b}).$$ The product $a_is_i\cdot(\mathbf{a}\times\mathbf{b})$ is a sum of terms of the form $f(s_j,s_k)a_ia_jb_k$ where $\{s_i,s_j,s_k\}\in T$. This term is canceled out by the term $f(s_i,s_k)a_ja_ib_k$ in the product $a_js_j\cdot(\mathbf{a}\times\mathbf{b})$ (since $f(s_j,s_k)=-f(s_i,s_k)$ by the definition of $f$ in an oriented Steiner system). 
Therefore every term in $\mathbf{a}\cdot(\mathbf{a}\times\mathbf{b})$ is canceled by another term in the sum, and so $\mathbf{a}\cdot(\mathbf{a}\times\mathbf{b})=0$.
In a similar manner, $\mathbf{b}\cdot(\mathbf{a}\times\mathbf{b})=0$.
\end{proof}

\begin{defn}\label{cross}
    Given a real inner product space $V$ with inner product $\langle\cdot,\cdot\rangle$, a \textbf{cross product} on $V$ is a function $\cdot\times\cdot:V\times V\to V$ that satisfies the following three properties:
    \begin{enumerate}
        \item $\times$ is bilinear,
        \item $\langle v,v\times w \rangle=0$ for all $v,w\in V$,
        \item $|v|^2|w|^2=|v \times w|^2+\langle v,w\rangle^2$ for all $v,w\in V \ ($where $|v|^2=\langle v, v\rangle )$ \normalfont{\cite{WSM}}.
    \end{enumerate}
\end{defn}

\begin{rmk}
    The Steiner product on $\R^\S$ given in Definition \ref{StP} satisfies Criteria 1 and 2 of the cross product as given in Definition \ref{cross}.
\end{rmk}
In general, an oriented Steiner triple system leads to a Steiner Product that does not satisfy Criterion 3, with only two exceptions: an oriented Steiner triple system with three elements (with either of the two orientations), and the one with seven elements whose orientation corresponds to the multiplication table for distinct imaginary octonions.

\begin{prop}
    The Steiner product is skew-symmetric.
\end{prop}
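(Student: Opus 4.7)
The plan is to first establish skew-symmetry on the basis $\S$ and then extend by bilinearity. For basis elements $s, s' \in \S$, I would split into two cases. If $s = s'$, then by Definition \ref{StP} we have $s \times s = f(s,s) s'' = 0$, so skew-symmetry holds trivially. If $s \neq s'$, then there is a unique triple $t = \{s, s', s''\} \in T$ containing both, and crucially the third element $s''$ depends only on the unordered pair $\{s,s'\}$. Hence $s \times s' = f(s,s') s''$ and $s' \times s = f(s',s) s''$ share the same $s''$, and the orientation function satisfies $f(s',s) = -f(s,s')$ by construction, giving $s' \times s = -(s \times s')$.

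Next I would extend to arbitrary $\mathbf{a} = \sum_s a_s s$ and $\mathbf{b} = \sum_t b_t t$ in $\R^\S$ via the bilinear extension in Definition \ref{StP}. Starting from
$$\mathbf{b} \times \mathbf{a} = \sum_{s \in \S}\sum_{t \in \S} b_s a_t (s \times t),$$
I would relabel the indices (swap $s \leftrightarrow t$) to obtain $\sum_{s,t} a_s b_t (t \times s)$, then apply the basis-level skew-symmetry established above to replace $t \times s$ with $-(s \times t)$. This yields $\mathbf{b}\times\mathbf{a} = -\mathbf{a}\times\mathbf{b}$.

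There is no real obstacle here: the result is essentially a restatement, for the bilinear extension, of the skew-symmetry already built into the orientation function $f$. The only subtlety worth flagging explicitly is that $s''$ is determined by the set $\{s,s'\}$ rather than by an ordered pair, which is what allows the sign from $f$ to propagate cleanly without any change in the third basis vector. Once that observation is in place, the computation is purely formal.
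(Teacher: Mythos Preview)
Your proposal is correct and follows essentially the same approach as the paper: establish $s\times s' = -(s'\times s)$ on basis elements and then extend by bilinearity via the double sum. Your treatment of the basis case is in fact more explicit than the paper's (which simply says ``by construction''), correctly noting that $s''$ depends only on the unordered pair $\{s,s'\}$ so that the sign from $f$ is all that changes.
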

\begin{proof}
    First note that by construction that if $s_i,s_j \in \S$ then $$s_i\times s_j=-(s_j\times s_i)$$ for all $i,j$. Now let $\mathbf{a}=\sum_{i=1}^na_is_i$ and $\mathbf{b}=\sum_{i=1}^nb_is_i$. Then $$\mathbf{a}\times\mathbf{b}=\sum_{i=1}^n\sum_{j=1}^na_ib_j(s_i\times s_j)=\sum_{i=1}^n\sum_{j=1}^n-a_ib_j(s_j\times s_i)=-\sum_{i=1}^n\sum_{j=1}^na_ib_j(s_j\times s_i)=-(\mathbf{b}\times\mathbf{a}).$$
\end{proof}

We will end this section with a word on denoting oriented Steiner triples. Consider the non-oriented Steiner triple system on seven elements, given by $$\S=\{s_1,s_2,s_3,s_4,s_5,s_6,s_7\},$$ $$T=\{\{s_1,s_2,s_3\},\{s_1,s_4,s_5\},\{s_1,s_6,s_7\},\{s_2,s_4,s_6\},\{s_2,s_5,s_7\},\{s_3,s_4,s_7\},\{s_3,s_5,s_6\}\}.$$ This Steiner triple system can be illustrated using the Fano plane, as shown in Figure \ref{fig:sts7}. Each of the seven lines of the Fano plane represents one of the triples of $T$.
    \begin{figure}[h]
        \centering
        \includegraphics[width=0.5\linewidth]{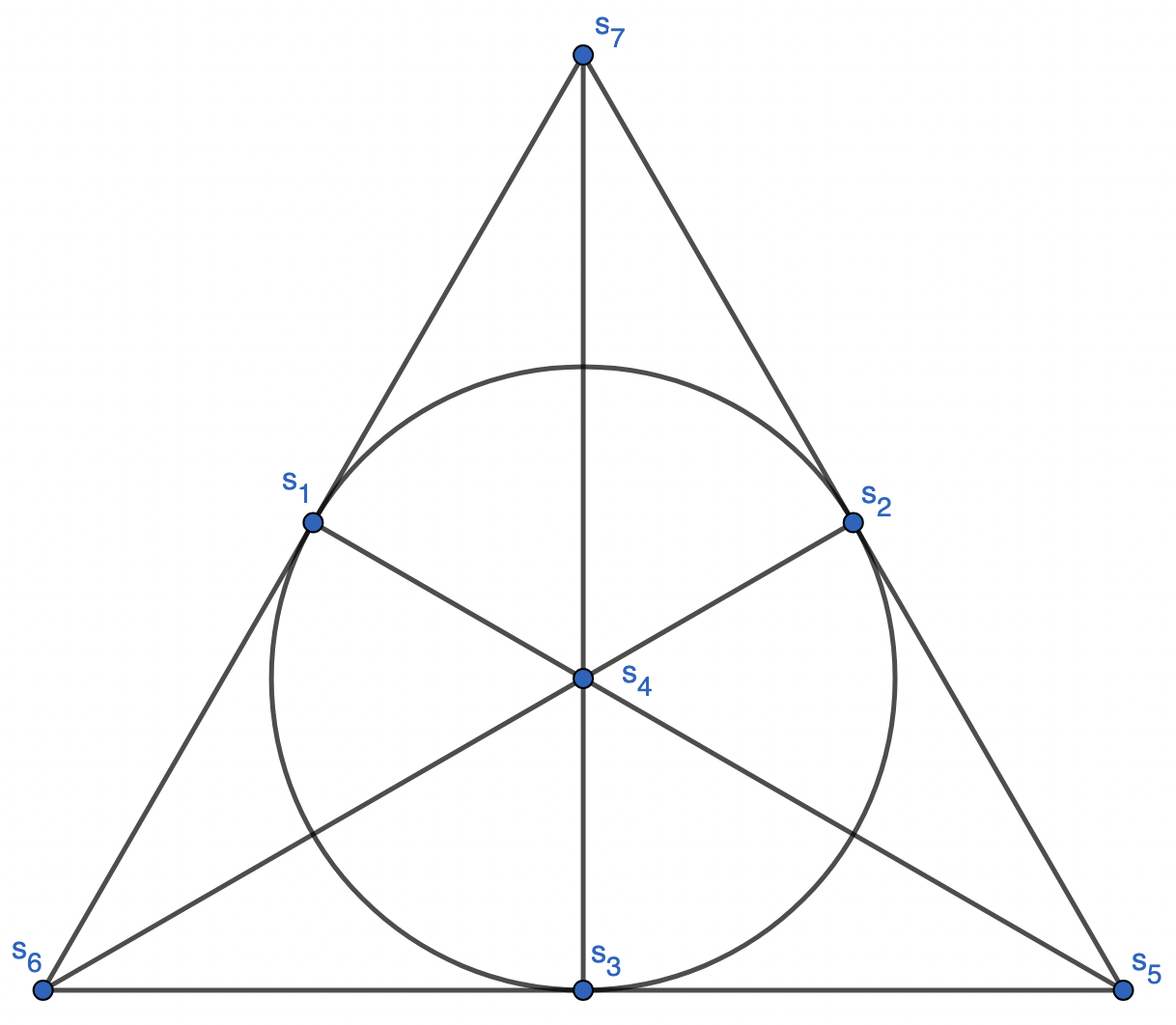}
        \caption{$STS(7)$ represented as the Fano plane}
        \label{fig:sts7}
    \end{figure}
    Now, if we want to orient the Steiner triple system $(\S,T)$, we can graphically represent each oriented triple as a directed cycle graph on three vertices. For example, we can represent the oriented triple $s_1\rightarrow s_2 \rightarrow s_3 \rightarrow s_1$ by a directed cycle graph, as shown in Figure \ref{fig:dcg}, 
    and illustrate the entire oriented Steiner triple system by placing arrows on all the lines in the Fano plane figure, as shown in Figure \ref{fig:octo} (with implied "wrapping arrows" that are not drawn).
    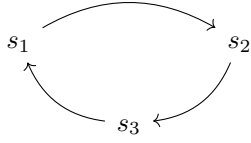
\begin{figure}\begin{center}
        \begin{tikzcd}
            s_1\arrow[rr, bend left]& &s_2\arrow[dl, bend left]\\
            &s_3\arrow[ul, bend left]& \\
        \end{tikzcd}
        \caption{Directed cycle graph}
        \label{fig:dcg}
    \end{center}
    \end{figure}
    
    \begin{figure}
        \centering
        \includegraphics[width=0.5\linewidth]{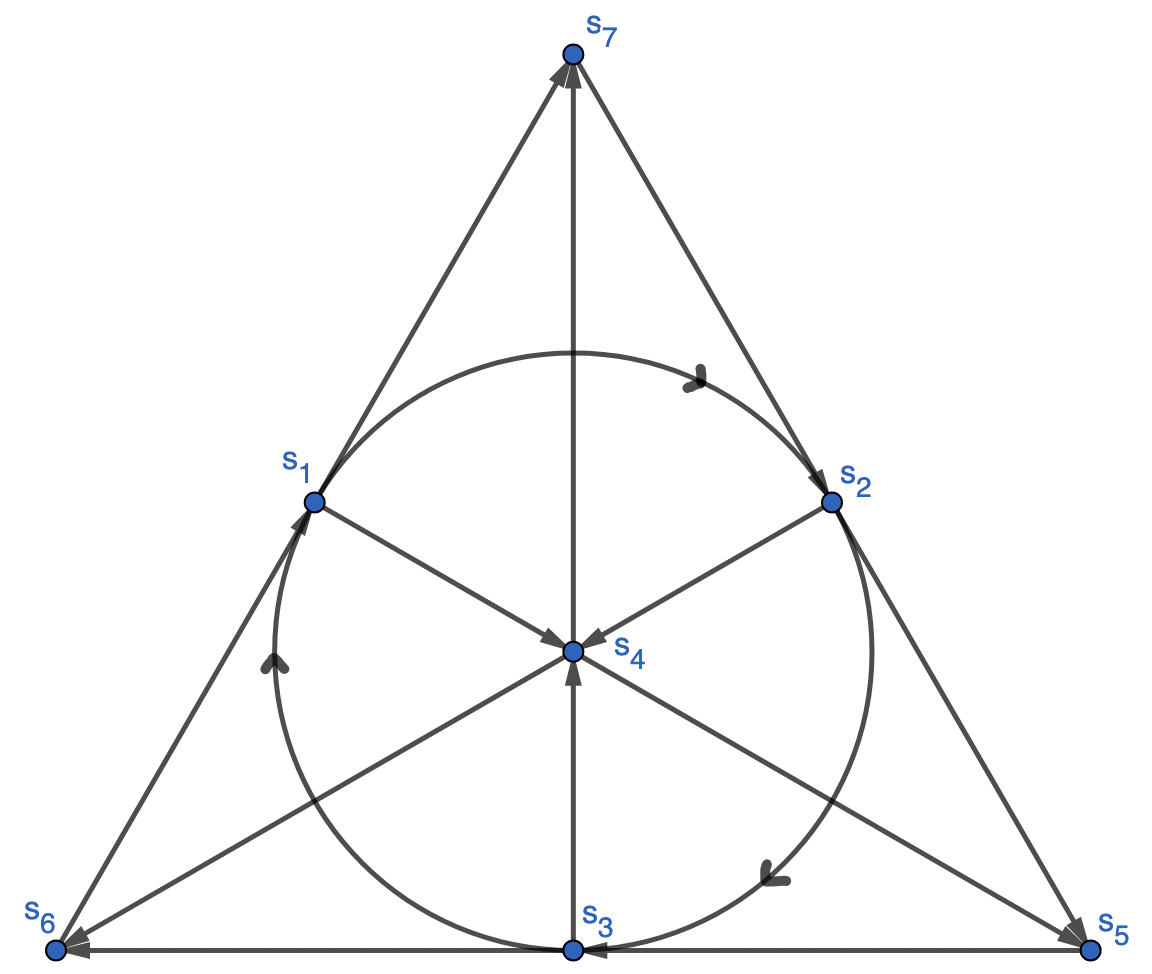}
        \caption{An oriented $STS(7)$}
        \label{fig:octo}
    \end{figure}
    
    Throughout this paper, we will use rectangular brackets as in $[s,s',s'']$ to denote oriented triples (whose associated skew-symmetric orientation function satisfies $f(s,s')=f(s',s'')=f(s'',s)=1$). With this notation, $[s_1,s_2,s_3]=[s_2,s_3,s_1]=[s_3,s_1,s_2]$ all represent the same orientation of the triple $\{s_1,s_2,s_3\}$ as depicted in Figure \ref{fig:octo}. Following this notation, we can represent the oriented Steiner triple system $(\S,\O(T))$, illustrated in Figure \ref{fig:octo}, as $$\S=\{s_1,s_2,s_3,s_4,s_5,s_6,s_7\},$$ $$\O(T)=\{[s_1,s_2,s_3],[s_1,s_4,s_5],[s_1,s_7,s_6],[s_2,s_4,s_6],[s_2,s_5,s_7],[s_3,s_4,s_7],[s_3,s_6,s_5]\}.$$
    
    \section{Classifying Oriented Steiner Systems of Order 7 and 9}
It is known that a Steiner triple system must have order $6k+1$ or $6k+3$ for some $k$. Up to isomorphism, there are unique Steiner triple systems of order 3, order 7, and order 9. There is a rapid increase in the number of Steiner triple systems of higher order. In particular, there are 2 of order 13, 80 of order 15, 11,084,874,829 of order 19, and 14,796,207,517,873,771 of order 21 \cite{HO}. When we consider imposing the additional structure of an orientation on a Steiner triple system, there is again a classification problem up to isomorphism. In this section, we classify the oriented Steiner triple systems of order 3, 7, and 9. 

\begin{defn}
Two Steiner triple systems $(\S,T)$ and $(\S',T')$ are \textbf{isomorphic} if there is a bijective map $$\psi:\S\to\S'$$ such that $\{s,s',s''\}\in T$ if and only if $\{\psi(s),\psi(s'),\psi(s'')\}\in T'$.

    Two oriented Steiner triple systems $(\S,\O(T))$ and $(\S',\O'(T'))$ are \textbf{isomorphic} if there is a bijective map $$\phi:\S\to\S'$$ such that  $[s,s',s'']\in \O(T)$ if and only if $[\phi(s),\phi(s'),\phi(s'')]\in \O'(T')$.
\end{defn}

The classification of oriented Steiner triple systems of order 3 is straightforward. There is a unique Steiner triple system, $(\S,T)$, of order $3$ given by $\S=\{1,2,3\}$, $T=\{\{1,2,3\}\}$. Initially, it seems there are two oriented Steiner triple systems of order 3: $(\{1,2,3\},\{[1,2,3]\})$ and $(\{1,2,3\},\{[1,3,2]\})$. However, using the bijective map $\phi: \{1,2,3\} \rightarrow \{1,2,3\}$ given by $\phi(1)= 1, \phi(2)=3, \phi(3)=2$, we get an isomorphism between $(\{1,2,3\},\{[1,2,3]\})$ and $(\{1,2,3\},\{[1,3,2]\})$. Thus, there is a unique oriented Steiner triple system of order 3 up to isomorphism.

\begin{defn}
    Let $(\S,\O(T))$ be an oriented Steiner triple system. We can build an oppositely oriented Steiner triple system by reversing the orientation of each element of $\O(T)$. We denote this oppositely oriented system by $(\S,\overline{\O}(T))$. We will call $(\S,\O(T))$ \textbf{reflexive} if $(\S,\O(T))$ is isomorphic to $(\S,\overline{\O}(T))$.
\end{defn}

Up to isomorphism, there is only one oriented Steiner triple systems of order 3 and it is reflexive.
We now focus on the classification of oriented Steiner triple systems of order 7 and determine their automorphism groups.

\begin{defn}
    The \textbf{automorphism group} of a Steiner triple system $(\S,T)$ is the group (under composition) of isomorphisms $\phi: (\S,T) \rightarrow (\S,T)$. We denote this by $\Aut(\S,T)$. Similarly, the automorphism group of an oriented Steiner triple system $(\S,\O(T))$ is the group of isomorphisms $\phi: (\S,\O(T)) \rightarrow (\S,\O(T))$. We denote this by $\Aut(\S,\O(T))$.
\end{defn}

\begin{rmk}
    Let $(\S,T)$ be a Steiner triple system and let $(\S,\O(T))$ be an oriented Steiner triple system. Suppose $\phi:\S \rightarrow \S$ is bijective. Define $$\phi(T)=\{\{\phi(s),\phi(s'),\phi(s'')\}\ |\ \{s,s',s''\} \in T\}$$ $$\phi(\O(T))=\{[\phi(s),\phi(s'),\phi(s'')]\ |\ [s,s',s''] \in \O(T)\}.$$ We have $$\Aut(\S,T) = \{\phi:\S \rightarrow \S\ |\ \phi \ {\rm is\ bijective\ and\ } \phi(T)=T\}$$ $$\Aut(\S,\O(T)) = \{\phi:\S \rightarrow \S\ |\ \phi \ {\rm is\ bijective\ and\ } \phi(\O(T))=\O(T)\}.$$ From this observation, it is clear that $\Aut(\S,T)$ and $\Aut(\S,\O(T))$ have natural representations as subgroups of the symmetric group $Aut(\S)=S_{\S}$ and that the representation of $\Aut(\S,\O(T))$ sits as a subgroup in the representation of $\Aut(\S,T)$. We will denote this by $\Aut(\S,\O(T)) < \Aut(\S,T)$. In the order 3 case,  we have $\Aut(\{1,2,3\},\{[1,2,3]\}) \cong \Aut(\{1,2,3\},\{[2,1,3]\}) \cong C_3 < \Aut(\{1,2,3\},\{\{1,2,3\}\})\cong  S_3$. In terms of their permutation representation, we have $\langle (1,2,3)\rangle < \langle (1,2,3),(1,2)\rangle = S_3$ $($where $\langle (1,2,3),(1,2)\rangle$ denotes the subgroup generated by the permutations, in cycle notation, $(1,2,3)$ and $(1,2))$. As noted before, we have an isomorphism between the oriented Steiner triple systems $(\{1,2,3\},\{[1,2,3]\})$ and $(\{1,2,3\},\{[1,3,2]\})$ given by the transposition $(2,3)$.
\end{rmk}

An interesting theorem of Strambach and Stuhl shows that the automorphism group of an oriented Steiner triple system must have odd order \cite{StSt}. A simple illustration of this theorem is seen in the order 3 case where $\Aut(\{1,2,3\},\{[1,2,3]\}) \cong \Aut(\{1,2,3\},\{[2,1,3]\}) \cong C_3$. We will see further illustrations of this theorem for oriented Steiner triple systems of order 7 and order 9 in the rest of this section.

 Up to isomorphism, there is a unique Steiner triple system on seven elements. For clarity, we will fix a model of this system. Instead of using variables to label the seven elements in the set, we will use the numbers $1,2,3,4,5,6,7$. The collection of triples will therefore be triples of numbers. For the theorem that follows, we use the following model:
 $$\S=\{1,2,3,4,5,6,7\},$$ $$T=\{\{1,2,3\},\{1,4,5\},\{1,6,7\},\{2,4,6\},\{2,5,7\},\{3,4,7\},\{3,5,6\}\}.$$

 It is well known that the automorphism group of the Steiner triple system, which we can represent as a subgroup of $S_7$, is a simple group of order $168$.
 
\begin{thm}\label{seven}
There are exactly four oriented Steiner triple systems on seven elements, up to isomorphism. Two isomorphism classes have a non-Abelian automorphism group of order 21, and two isomorphism classes have a cyclic automorphism group of order 3. None of the isomorphism classes are reflexive.

Representatives of the distinct isomorphism classes are $$(\S,\O_1(T))=\{[1, 2, 3], [1, 4, 5], [1, 6, 7], [2, 4, 6], [2, 7, 5], [3, 6, 5], [3, 7, 4]\}_{21},$$ 
$$(\S,\O_2(T))=\{[1, 2, 3], [1, 4, 5], [1, 7, 6], [2, 4, 6], [2, 5, 7], [3, 4, 7], [3, 6, 5]\}_{21},$$

$$(\S,\O_3(T))=\{[1, 2, 3], [1, 4, 5], [1, 6, 7], [2, 4, 6], [2, 5, 7], [3, 4, 7], [3, 5, 6]\}_{3},$$ 
$$(\S,\O_4(T))=\{[1, 2, 3], [1, 4, 5], [1, 6, 7], [2, 4, 6], [2, 5, 7], [3, 4, 7], [3, 6, 5]\}_{3}.$$

We have $(\S,\overline{\O_1}(T))\cong (\S, \O_2(T))$ and $(\S,\overline{\O_3}(T))\cong (\S, \O_4(T))$ (where $\overline{\O}(T)$ denotes that the orientation of each oriented triple in $\O(T)$ has been reversed). The subscript on each isomorphism class denotes the order of its automorphism group. Namely, $|\Aut(\S,\O_1(T))|=|\Aut(\S,\O_2(T))|=21$ and $|\Aut(\S,\O_3(T))|=|\Aut(\S,\O_4(T))|=3$.
\end{thm}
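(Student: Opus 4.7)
The plan is to exploit the action of $G = \Aut(\S,T)$, the simple group of order $168$, on the set $X$ of all $2^7 = 128$ orientations of the seven triples in $T$. Isomorphism classes of oriented Steiner triple systems with underlying $(\S,T)$ are in bijection with the $G$-orbits on $X$, and for each $\O(T) \in X$ the $G$-stabilizer is exactly $\Aut(\S,\O(T))$. By the Strambach--Stuhl theorem every such stabilizer has odd order, and since $168 = 2^3 \cdot 3 \cdot 7$, the possible stabilizer sizes lie in $\{1, 3, 7, 21\}$, with corresponding orbit sizes $\{168, 56, 24, 8\}$.

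The heart of the argument is a counting step. Any $\sigma \in G$ of order $7$ acts as a single $7$-cycle on $\S$, and hence also as a single $7$-cycle on $T$, since a $7$-cycle on $\S$ cannot fix any $3$-subset setwise. So $\sigma$ fixes $\O(T) \in X$ iff the orientation on one triple cyclically determines the rest, giving exactly two fixed orientations per Sylow $7$-subgroup. Two distinct Sylow $7$-subgroups $P_1, P_2$ cannot both stabilize the same orientation: otherwise $\Aut(\S,\O(T))$ would be an odd-order subgroup of $G$ properly containing $P_1$, forcing it to equal $F_{21} = C_7 \rtimes C_3$, but $F_{21}$ has a unique Sylow $7$-subgroup. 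Since $G$ has $|G|/|N_G(P)| = 8$ Sylow $7$-subgroups, the number of orientations with stabilizer of order at least $7$ is $2 \cdot 8 = 16$. Decomposing into orbits of sizes $8$ (stabilizer $F_{21}$) and $24$ (stabilizer $C_7$) gives $8a + 24b = 16$, which forces $a = 2, b = 0$; the remaining $112$ orientations satisfy $56c + 168d = 112$, which forces $c = 2, d = 0$. Thus $X$ partitions into exactly four $G$-orbits: two of size $8$ with automorphism group of order $21$, and two of size $56$ with automorphism group of order $3$.

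It remains to verify that $\O_1, \O_2, \O_3, \O_4$ realize these four orbits. For $\O_1$ and $\O_2$ I would exhibit explicit $F_{21}$-subgroups of $G$ stabilizing each orientation (a generating $7$-cycle together with a compatible order-$3$ element preserving each oriented triple); for $\O_3$ and $\O_4$ I would exhibit a specific $3$-cycle in the stabilizer. By the counting argument these stabilizers must equal the full automorphism groups, of orders $21$ and $3$ respectively. To confirm that $\O_1, \O_2$ lie in distinct size-$8$ orbits (and similarly for $\O_3, \O_4$), I would enumerate the $8$-element orbit of $\O_1$ and verify that $\O_2$ is not among it. Finally, exhibiting explicit permutations in $G$ sending $\overline{\O_1}$ to $\O_2$ and $\overline{\O_3}$ to $\O_4$, together with the $G$-equivariance of reversal, shows that the induced involution on the four orbits swaps the two size-$8$ orbits and swaps the two size-$56$ orbits, establishing the non-reflexivity of every class. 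The principal obstacle is the explicit computational verification: each candidate permutation must be checked on the seven \emph{oriented} triples rather than merely on their underlying $3$-subsets. Once the counting of the second paragraph is in place, however, a single valid $F_{21}$-subgroup for each of $\O_1, \O_2$, a single order-$3$ element for each of $\O_3, \O_4$, and the pairwise orbit-disjointness checks are enough to finish.
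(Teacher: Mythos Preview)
Your proposal is correct and takes a genuinely different route from the paper. The paper's proof is purely computational: it enumerates all $128$ orientations in Maple, partitions them into $G$-orbits by brute force, and reads off representatives and automorphism-group sizes from the output. Your approach instead uses the Strambach--Stuhl odd-order theorem together with Sylow theory to determine the orbit structure \emph{a priori}: since stabilizers are odd-order subgroups of a group of order $168$, they lie in $\{1,3,7,21\}$, and your fixed-point count for the eight Sylow $7$-subgroups forces the decomposition $128 = 8+8+56+56$ with nothing beyond integer arithmetic. What your argument buys is a conceptual explanation of \emph{why} there are exactly four classes and \emph{why} the automorphism groups have precisely those orders; what the paper's enumeration buys is that the representative-level checks (that $\O_1,\dots,\O_4$ land in four distinct orbits, and the reversal isomorphisms) come for free from the same run, whereas you still owe those verifications by hand. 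One strengthening available to you: for the two size-$8$ orbits non-reflexivity is computation-free, since any $g\in G$ with $g\cdot\O=\overline{\O}$ would normalize $\Aut(\S,\O(T))\cong F_{21}$ without lying in it (using $\Aut(\S,\O(T))=\Aut(\S,\overline{\O}(T))$), but $F_{21}=N_G(P_7)$ is self-normalizing in $G$. For the size-$56$ orbits the normalizer of a $C_3$ in $G$ has order $6$, so this shortcut is unavailable and your proposed explicit check is indeed needed there.
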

\begin{proof}
The proof is given through an explicit computation in $Maple^{TM}$ \cite{Maple}.
The Maple code for the computation can be found in Appendix \ref{sts237}.
From the Maple code and the use of the open source computer algebra system \textbf{GAP} \cite{GAP4}, we have the permutation representation $\Aut(\S,T) \cong \langle (1,2,4,3,6,7,5),(4,5)(6,7)\rangle < \Aut(\S)=S_7$. GAP identifies this representation of $\Aut(\S,T)$ as a simple group of order 168.
The permutation representation of $\Aut(\S,\O_1(T))$ as a subgroup of $\langle (1,2,4,3,6,7,5),(4,5)(6,7)\rangle$ is $$\Aut(\S,\O_1(T))=\langle(2,4,6)(3,5,7),(1,2,3)(4,7,6)\rangle.$$ GAP identifies this group as $ C_7\rtimes C_3$, a non-Abelian group of order 21. The permutation representation of $\Aut(\S,\O_2(T))$ as a subgroup of $\langle (1,2,4,3,6,7,5),(4,5)(6,7)\rangle$ is $$\Aut(\S,\O_2(T))=\langle(2,4,7)(3,5,6),(1,2,3)(5,6,7)\rangle.$$ GAP identifies this group as $ C_7\rtimes C_3$, the same non-Abelian group of order 21 as found for $\Aut(\S,\O_1(T))$. These are two different permutation representations of the same group. The permutation representations are conjugate within the permutation representation of $\Aut(\S,T)$. The permutation representation of $\Aut(\S,\O_3(T))$ as a subgroup of $\langle (1,2,4,3,6,7,5),(4,5)(6,7)\rangle$ is $\Aut(\S,\O_3(T))=\langle(2,4,6)(3,5,7)\rangle.$ The permutation representation of $\Aut(\S,\O_4(T))$ as a subgroup of $\langle (1,2,4,3,6,7,5),(4,5)(6,7)\rangle$ is $\Aut(\S,\O_4(T))=\langle(1,7,6)(3,5,4)\rangle.$ Both of the automorphism groups are isomorphic to the cyclic group of order 3. Their permutation representations are conjugate in the group $\langle (1,2,4,3,6,7,5),(4,5)(6,7)\rangle$.
\end{proof}
Note that since $\Aut(\S,\O_1(T))$ and $\Aut(\S,\O_3(T))$ are subgroups of the simple group $\Aut(\S,T)$, neither are normal in $\Aut(\S,T)$. Another model for the simple group $\Aut(\S,T)$ is $\text{GL}(3,\F_2)$ \cite{Brown01102009}.

Up to isomorphism, there is a unique Steiner triple system, $(\S,T)$ of order nine. In condensed notation, we will use the following representation of this system:

$(\S,T) = (\{1,2,3,4,5,6,7,8,9\},\  \{123,456,789,147,258,369,159,267,348,168,249,357\}).$

 The automorphism group of this Steiner triple system has a permutation representation as the subgroup of $S_9$ given by $\langle (2,6,4,9,3,8,7,5),(1,3,2)(4,7,5,8,6,9)\rangle$. It is a group of order $432$.
 
\begin{thm}\label{nine}
There are exactly 16 isomorphism classes of oriented Steiner triple systems on nine elements. Seven of the classes have an automorphism group of size 1, seven have an automorphism group of size 3, one of size 9, and one of size 27. In the list below, we will denote the oriented Steiner triple systems as $(\S,\O_1(T)), \dots, (\S,\O_{16}(T))$. The first eight isomorphism classes, $(\S,\O_1(T)), \dots, (\S,\O_{8}(T))$ given below are reflexive. The remaining eight are not reflexive and are grouped as four pairs. In other words, we have $(\S,\O_9(T))\cong (\S,\overline{\O}_{10}(T)),(\S,\O_{11}(T))\cong (\S,\overline{\O}_{12}(T)),(\S,\O_{13}(T))\cong (\S,\overline{\O}_{14}(T)),(\S,\O_{15}(T))\cong (\S,\overline{\O}_{16}(T))$.

Representatives of the distinct isomorphism classes are: $$\{[1, 2, 3], [1, 4, 7], [1, 5, 9], [1, 6, 8], [2, 4, 9], [2, 5, 8],[2, 6, 7], [3, 4, 8], [3, 5, 7], [3, 6, 9], [4, 5, 6], [7, 8, 9]\}_{27},$$
$$\{[1,2,3],[1,4,7],[1,5,9],[1,6,8],[2,4,9],[2,5,8],[2,6,7],[3,4,8],[3,5,7],[3,6,9],[4,5,6],[7,9,8]\}_9,$$
$$\{[1,2,3],[1,4,7],[1,5,9],[1,6,8],[2,4,9],[2,5,8],[2,6,7],[3,4,8],[3,5,7],[3,9,6],[4,5,6],[7,8,9]\}_3,$$
$$\{[1,2,3],[1,4,7],[1,5,9],[1,6,8],[2,4,9],[2,5,8],[2,6,7],[3,7,5],[3,8,4],[3,9,6],[4,5,6],[7,8,9]\}_3,$$
$$\{[1,2,3],[1,4,7],[1,5,9],[1,6,8],[2,4,9],[2,5,8],[2,6,7],[3,7,5],[3,8,4],[3,9,6],[4,5,6],[7,9,8]\}_3,$$
$$\{[1, 2, 3], [1, 4, 7], [1, 5, 9], [1, 6, 8], [2, 4, 9], [2, 5, 8], [2, 6, 7], [3, 4, 8], [3, 5, 7], [3, 9, 6], [4, 5, 6], [7, 9, 8]\}_1,$$
$$\{[1, 2, 3], [1, 4, 7], [1, 5, 9], [1, 6, 8], [2, 4, 9], [2, 5, 8], [2, 6, 7], [3, 4, 8], [3, 7, 5], [3, 9, 6], [4, 5, 6], [7, 8, 9]\}_1,$$
$$\{[1, 2, 3], [1, 4, 7], [1, 5, 9], [1, 6, 8], [2, 4, 9], [2, 5, 8], [2, 6, 7], [3, 4, 8], [3, 7, 5], [3, 9, 6], [4, 6, 5], [7, 9, 8]\}_1,$$
  
  $$\{[1,2,3],[1,4,7],[1,5,9],[1,6,8],[2,4,9],[2,5,8],[2,6,7],[3,7,5],[3,8,4],[3,9,6],[4,5,6],[7,9,8]\}_3,$$ 
$$\{[1,2,3],[1,4,7],[1,5,9],[1,6,8],[2,4,9],[2,5,8],[2,6,7],[3,7,5],[3,8,4],[3,9,6],[4,6,5],[7,8,9]\}_3,$$ 

$$\{[1,2,3],[1,4,7],[1,5,9],[1,6,8],[2,4,9],[2,5,8],[2,7,6],[3,4,8],[3,7,5],[3,9,6],[4,5,6],[7,8,9]\}_3,$$
  $$\{[1, 2, 3], [1, 4, 7], [1, 5, 9], [1, 6, 8], [2, 4, 9], [2, 7, 6], [2, 8, 5], [3, 4, 8], [3, 5, 7], [3, 9, 6], [4, 6, 5], [7, 9, 8]\}_3,$$
  
  $$\{[1, 2, 3], [1, 4, 7], [1, 5, 9], [1, 6, 8], [2, 4, 9], [2, 5, 8], [2, 6, 7], [3, 4, 8], [3, 7, 5], [3, 9, 6], [4, 5, 6], [7, 9, 8]\}_1,$$ 
  $$\{[1, 2, 3], [1, 4, 7], [1, 5, 9], [1, 6, 8], [2, 4, 9], [2, 5, 8], [2, 6, 7], [3, 4, 8], [3, 7, 5], [3, 9, 6], [4, 6, 5], [7, 8, 9]\}_1,$$ 
  
  $$\{[1, 2, 3], [1, 4, 7], [1, 5, 9], [1, 6, 8], [2, 4, 9], [2, 5, 8], [2, 7, 6], [3, 4, 8], [3, 7, 5], [3, 9, 6], [4, 5, 6], [7, 9, 8]\}_1,$$ 
  $$\{[1, 2, 3], [1, 4, 7], [1, 5, 9], [1, 6, 8], [2, 4, 9], [2, 5, 8], [2, 7, 6], [3, 4, 8], [3, 7, 5], [3, 9, 6], [4, 6, 5], [7, 9, 8]\}_1.$$
\end{thm}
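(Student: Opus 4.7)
The plan is to mirror the computational strategy used in the proof of Theorem \ref{seven}, but applied to the unique Steiner triple system of order $9$. Since $(\S,T)$ has $12$ triples, there are exactly $2^{12}=4096$ possible orientations of $T$. The group $\Aut(\S,T)$, of order $432$, acts on this set of $4096$ orientations by permuting elements of $\S$ (the induced action on oriented triples sends $[s,s',s'']$ to $[\phi(s),\phi(s'),\phi(s'')]$). Two oriented Steiner triple systems supported on $(\S,T)$ are isomorphic if and only if they lie in the same orbit under this action, so the classification reduces to an orbit computation.

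First I would enumerate the $4096$ orientations in Maple and, for each one, compute its orbit under the permutation representation of $\Aut(\S,T) \cong \langle (2,6,4,9,3,8,7,5),(1,3,2)(4,7,5,8,6,9)\rangle$. Grouping these orbits yields the isomorphism classes. Using the orbit--stabilizer theorem, the automorphism group of the oriented system represented by $\O(T)$ is the stabilizer in $\Aut(\S,T)$ of $\O(T)$, and $|\Aut(\S,\O(T))|=432/|\text{orbit}|$. For each of the sixteen claimed representatives $\O_1(T),\dots,\O_{16}(T)$, I would verify by direct stabilizer computation in GAP that the automorphism groups have orders $27,\,9,\,3,\,3,\,3,\,1,\,1,\,1,\,3,\,3,\,3,\,3,\,1,\,1,\,1,\,1$ respectively. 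As a consistency check, the sum of orbit sizes must recover $4096$, and indeed
\[
1\cdot 16 + 1\cdot 48 + 7\cdot 144 + 7\cdot 432 = 16 + 48 + 1008 + 3024 = 4096,
\]
so no orbit has been missed and no two of the sixteen representatives are accidentally isomorphic (once it has been checked that they lie in distinct orbits, e.g.\ by listing each orbit explicitly or by comparing canonical forms).

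To settle the reflexivity statement I would compute, for each representative $\O_i(T)$, the reversal $\obar{\O_i}(T)$ (which simply flips each cyclic ordering) and then check which orbit it lands in. For $i=1,\dots,8$ the orbit is fixed, so $(\S,\O_i(T))$ is reflexive; for $i=9,\dots,16$ the reversal lands in the orbit of a different representative, and the pairings $\O_9\leftrightarrow\O_{10}$, $\O_{11}\leftrightarrow\O_{12}$, $\O_{13}\leftrightarrow\O_{14}$, $\O_{15}\leftrightarrow\O_{16}$ can be exhibited by producing an explicit $\phi\in S_\S$ mapping one to the reverse of the other. Finally, compatibility with Strambach and Stuhl's theorem that $|\Aut(\S,\O(T))|$ must be odd is automatic from the computed orders $1,3,9,27$.

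The main obstacle is purely computational book-keeping: the action of a group of order $432$ on a set of size $4096$ produces orbits of size up to $432$, and one must be careful to canonically represent oriented triples (since $[a,b,c]=[b,c,a]=[c,a,b]$) so that orbit equality is tested correctly. Once this canonicalization is in place, the orbit partition, stabilizer orders, and reflexivity test are all routine Maple/GAP calls; the Maple code carrying this out is given in the appendix (analogous to the order-$7$ case).
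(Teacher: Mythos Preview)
Your proposal is correct and follows essentially the same approach as the paper: an explicit Maple/GAP computation that enumerates all $2^{12}=4096$ orientations, partitions them into orbits under the permutation representation of $\Aut(\S,T)\cong\langle(2,6,4,9,3,8,7,5),(1,3,2)(4,7,5,8,6,9)\rangle$ of order $432$, and reads off stabilizer orders via orbit--stabilizer. Your consistency check that $16+48+7\cdot144+7\cdot432=4096$ is a nice addition that the paper does not spell out.
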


\begin{proof}
The proof is given via an explicit computation in $Maple$ \cite{Maple}.
The Maple code for the computation can be found in Appendix \ref{sts239}.
From the Maple code and the use of the open source computer algebra system \textbf{GAP} \cite{GAP4}, we find the permutation representation $$\Aut(\S,T) \cong \langle (2,6,4,9,3,8,7,5),(1,3,2)(4,7,5,8,6,9)\rangle < \Aut(\S).$$ GAP identifies this representation of $\Aut(\S,T)$ with the identifier $[432,734]$. This is a group of order $432$ and corresponds to the affine group $Aff(2,\mathbb{F}_3)$. In general, the affine group on a vector space $V$ can be expressed as $Aff(V)=V \rtimes GL(V)$. Thus, we can express $Aff(2,\mathbb{F}_3)$ as $\mathbb{F}_3^2\rtimes GL(2,\mathbb{F}_3)$.
The permutation representation of $\Aut(\S,\O_1(T))$ as a subgroup of the permutation representation of $\Aut(\S,T)$ is $$\Aut(\S,\O_1(T))=\langle(4,5,6)(7,9,8),(1,4,9)(2,5,7)(3,6,8)\rangle.$$ The GAP identifier of this group is $[27,3]$, a non-Abelian group of order $27$ and exponent $3$. This group can be expressed as a semi-direct product $C_3^2\rtimes C_3$. A model for this group is as the set of upper triangular $3\times 3$ matrices with entries from $\mathbb{F}_3$ and $1's$ on the diagonal with matrix multiplication as the binary operation (the Heisenberg group $\text{He}_3$ over $\Z/3\Z$). The permutation representation of $\Aut(\S,\O_2(T))$ as a subgroup of the permutation representation of $\Aut(\S,T)$ is $$\Aut(\S,\O_2(T))=\langle(4,5,6)(7,9,8),(1,2,3)(7,9,8)\rangle.$$ GAP identifies this group as $ C_3\times C_3$. The permutation representations of $\Aut(\S,\O_i(T))$ for $i=3,4,5,9,10,11,12$ as  subgroups of the permutation representation of $\Aut(\S,T)$ are 

\

$\Aut(\S,\O_3(T))=\langle(1,7,4)(2,8,5)(3,9,6)\rangle$

$\Aut(\S,\O_4(T))=\langle(4,5,6)(7,9,8)\rangle$

$\Aut(\S,\O_5(T))=\langle(4,5,6)(7,9,8)\rangle$

$\Aut(\S,\O_9(T))=\langle(4,5,6)(7,9,8)\rangle$

$\Aut(\S,\O_{10}(T))=\langle(4,5,6)(7,9,8)\rangle$

$\Aut(\S,\O_{11}(T))=\langle(1,8,4)(2,9,5)(3,7,6)\rangle$

$\Aut(\S,\O_{12}(T))=\langle(1,2,7)(3,6,4)(5,8,9)\rangle$

\end{proof}

It is known that $\Aut(\S,T)\cong Aff(2,\F_3)$  has six normal subgroups (up to conjugacy class), none of which are $\text{He}_3$, $C_3^2$, or $C_3$. Thus only the oriented Steiner systems with trivial automorphism groups have automorphism groups that are normal in $\Aut(\S,T)$.

Some of the isomorphism classes are simply the opposite orientations of each other. Inverting the orientation of an oriented Steiner triple system will not necessarily result in an isomorphic oriented Steiner triple system, but will result in one with an isomorphic automorphism group (in fact, with an identical permutation representation).
\begin{prop}
    Let $(\S,\O(T))$ be an oriented Steiner triple system, and let $(\S,\overline{\O}(T))$ be the oriented Steiner triple system obtained by reversing all oriented triples in $(\S,\O(T))$. Then $\Aut(\S,\O(T))=\Aut(\S,\overline{\O}(T))$.
\end{prop}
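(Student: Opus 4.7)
The plan is to show directly that the two automorphism groups consist of the same permutations of $\S$, by exploiting the fact that reversing a cyclic ordering and relabeling via a bijection are operations that commute. Using the characterization from the earlier remark, an element $\phi \in S_\S$ lies in $\Aut(\S,\O(T))$ exactly when $\phi(\O(T)) = \O(T)$, and similarly for $\overline{\O}(T)$. So the whole proof reduces to verifying that $\phi(\O(T)) = \O(T)$ if and only if $\phi(\overline{\O}(T)) = \overline{\O}(T)$.

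The first step is to set up the key compatibility. For an oriented triple $[s,s',s''] \in \O(T)$, its reverse is $[s'',s',s] \in \overline{\O}(T)$, and applying $\phi$ gives $[\phi(s''), \phi(s'), \phi(s)]$, which is exactly the reverse of $[\phi(s), \phi(s'), \phi(s'')] = \phi([s,s',s''])$. Running this across all triples yields the identity $\phi(\overline{\O}(T)) = \overline{\phi(\O(T))}$, where on the right-hand side the overline denotes reversal of every oriented triple in the set.

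With this identity in hand, the main implication is immediate: if $\phi(\O(T)) = \O(T)$, then $\phi(\overline{\O}(T)) = \overline{\phi(\O(T))} = \overline{\O}(T)$, so $\phi \in \Aut(\S,\overline{\O}(T))$. The reverse inclusion follows symmetrically, either by applying the same argument with the roles of $\O(T)$ and $\overline{\O}(T)$ swapped, or by noting that $\overline{\overline{\O}(T)} = \O(T)$. Hence the two sets of bijections coincide, and in fact they agree as subgroups of $S_\S$ with the same permutation representation, not merely as abstract groups.

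There is no real obstacle here; the only subtlety is notational, namely keeping the two operations "apply $\phi$ componentwise" and "reverse cyclic ordering" carefully distinguished so that one can assert they commute on the level of oriented triples. Once that identity is in place the equality of automorphism groups is a one-line consequence.
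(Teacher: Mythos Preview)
Your proof is correct and follows essentially the same approach as the paper: both arguments rest on the observation that applying a bijection $\phi$ componentwise and reversing the cyclic orientation commute on oriented triples. The paper presents this as an element-by-element chase (given $[s,s',s'']\in\overline{\O}(T)$, reverse to land in $\O(T)$, apply $f$, reverse back), while you package the same content into the set-level identity $\phi(\overline{\O}(T))=\overline{\phi(\O(T))}$; the substance is identical.
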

\begin{proof}
    Let $f\in \Aut(\S,\O(T))$ and let $[s,s',s'']\in\overline{\O}(T)$. Then we have $[s,s'',s']\in\O(T)$, and so $[f(s),f(s''),f(s')]\in\O(T)$. Thus $[f(s),f(s'),f(s'')]\in\overline{\O}(T)$ and so $f\in\Aut(\S,\overline{\O}(T))$. The other direction is proven the same way.
\end{proof}

Suppose two oriented Steiner triple systems are isomorphic via an element $g\in S_{\S}$. The automorphism groups of the two oriented Steiner triple systems will be isomorphic and will both be subgroups of $\Aut(\S,T)$ but the explicit permutation representations of the automorphism groups may differ. They will be conjugate in $\Aut(\S,T)$ via conjugation by the element $g$.

\vspace{\baselineskip}

We would like to end this section with a look into how an automorphism of an oriented Steiner triple system can be lifted to an automorphism of $\R^\S$, and what special subgroups of $\Aut(\R^\S)$ can be identified therefrom. Let $\sigma\in\Aut(\S)$. We can lift the action of $\sigma$ to $\R^\S$. In particular, if $a=\sum_{s\in\S}a_s s \in\R^\S$, then define $$\sigma(a)=\sum_{s\in\S}a_s\sigma(s).$$ This action of $\sigma$ induces an automorphism of $\R^\S$ and we have the following sequence of injective group homomorphisms $$\Aut(\S,\O(T))\hookrightarrow \Aut(\S,T)\hookrightarrow \Aut(\S) \hookrightarrow\Aut(\R^\S).$$ Through these maps, we use elements of $\Aut(\S,\O(T)),\ \Aut(\S,T)$, and $\Aut(\S)$ to act on vectors in $\R^\S$.
\begin{lemma}
    Let $\sigma\in\Aut(\S,\O(T))$, and let $a,b\in\R^{\S}$. Then $\sigma(a\times b)=\sigma(a)\times\sigma(b)$.
\end{lemma}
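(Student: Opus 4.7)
The plan is to reduce to the case of basis elements using bilinearity, then use the fact that $\sigma$ preserves the orientation function $f$ to verify the identity directly.

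First I would expand $a = \sum_{s\in\S} a_s s$ and $b = \sum_{s'\in\S} b_{s'} s'$. Since $\sigma$ acts linearly on $\R^\S$ and since the Steiner product is bilinear by construction, both $\sigma(a\times b)$ and $\sigma(a)\times\sigma(b)$ expand to double sums of the form $\sum_{s,s'} a_s b_{s'}\,(\text{something depending only on }s,s')$. It therefore suffices to verify that $\sigma(s\times s') = \sigma(s)\times\sigma(s')$ for all basis elements $s,s'\in\S$.

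Next I would dispatch the trivial case $s=s'$: here $s\times s' = 0$ by definition (since $f(s,s)=0$), and $\sigma(s)\times\sigma(s)=0$ for the same reason, so both sides vanish. For $s\neq s'$, the Steiner triple system axioms produce a unique $s''\in\S$ with $\{s,s',s''\}\in T$, and Definition \ref{StP} gives $s\times s' = f(s,s')\,s''$. Applying $\sigma$ yields $\sigma(s\times s') = f(s,s')\,\sigma(s'')$. On the other hand, since $\sigma\in\Aut(\S,\O(T))\subseteq\Aut(\S,T)$, the image $\{\sigma(s),\sigma(s'),\sigma(s'')\}$ is again a triple in $T$, so
\[
\sigma(s)\times\sigma(s') \;=\; f\bigl(\sigma(s),\sigma(s')\bigr)\,\sigma(s'').
\]

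The remaining step — the only one requiring the full orientation-preserving hypothesis — is to show $f(\sigma(s),\sigma(s')) = f(s,s')$. This follows directly from the definition of $\Aut(\S,\O(T))$: if $f(s,s')=1$ then $[s,s',s'']\in\O(T)$, so $[\sigma(s),\sigma(s'),\sigma(s'')]\in\O(T)$, which means $f(\sigma(s),\sigma(s'))=1$; the case $f(s,s')=-1$ is symmetric. Combining the two expressions gives $\sigma(s\times s') = \sigma(s)\times\sigma(s')$ on basis pairs, and bilinearity extends the identity to all of $\R^\S\times\R^\S$. I do not expect any serious obstacle here — the proof is essentially the observation that the orientation function $f$ is a $\Aut(\S,\O(T))$-invariant, packaged through bilinearity.
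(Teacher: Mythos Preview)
Your proposal is correct and follows essentially the same approach as the paper: reduce to basis pairs via bilinearity and linearity of $\sigma$, then use that $\sigma$ sends oriented triples to oriented triples to conclude $\sigma(s_i\times s_j)=\sigma(s_i)\times\sigma(s_j)$. Your write-up is in fact slightly more careful than the paper's, since you explicitly handle the $s=s'$ case and spell out the invariance $f(\sigma(s),\sigma(s'))=f(s,s')$, which the paper leaves implicit.
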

\begin{proof}
    First, let us prove the claim for $s_i,s_j\in\S$. Let $[s_i,s_j,s_k]$ be an oriented triple in the system. Then $[\sigma(s_i),\sigma(s_j),\sigma(s_k)]$ is also an oriented triple in $(\S,\O(T))$. Thus $s_i\times s_j=s_k$ and $\sigma(s_i)\times\sigma(s_j)=\sigma(s_k)=\sigma(s_i\times s_j)$. Similarly, $\sigma(s_j\times s_i)=-\sigma(s_k)=\sigma(s_j)\times\sigma(s_i)$.
By bilinearity of the Steiner product and linearity of $\sigma$, the claim is also true for general linear combinations $a,b\in\R^{\S}$.
\end{proof}
 Thus every element of $\Aut(\S,\O(T))$ is an automorphism of $\R^\S$ that commutes with the Steiner product. We let $\Aut(\R^\S,\times)$ denote the collection of all automorphisms of $\R^\S$ that commute with the Steiner product. We have $\Aut(\S,\O(T))\leq\Aut(\R^\S,\times)$.

 \begin{thm}
     Let $(\S,\O_1(T))$ be the oriented Steiner triple system from Theorem \ref{seven}. More specifically, we consider $$(\S,\O_1(T))=\{[s_1,s_2,s_3],[s_1,s_4,s_5],[s_1,s_6,s_7],[s_2,s_4,s_6],[s_2,s_7,s_5],[s_3,s_6,s_5],[s_3,s_7,s_4]\}.$$ Let $\Aut(\R^\S,\times)$ denote the automorphisms of $\R^\S$ that commute with the associated Steiner product. We have $\Aut(\R^\S,\times)\cong G_2$, where $G_2$ is the smallest exceptional Lie group.
 \end{thm}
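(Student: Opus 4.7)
The plan is to reduce the theorem to the classical fact that $\Aut(\OO) \cong G_2$. The earlier discussion of Figure \ref{fig:octo} identifies the Fano-plane orientation encoding the multiplication of imaginary octonions; $(\S,\O_1(T))$ matches this multiplication structure after a (possibly global) reorientation, and in any case the proposition just above that $\Aut(\S,\O(T))=\Aut(\S,\overline{\O}(T))$ assures us that reversing all triples does not affect the automorphism group. Under the identification $s_i \leftrightarrow e_i$, $\R^\S$ becomes $\Im(\OO)$, the Steiner inner product becomes the restriction of the octonion inner product, and the Steiner product becomes the cross product $u\times v = \tfrac{1}{2}(uv - vu)$. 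It therefore suffices to show $\Aut(\Im(\OO),\times)\cong \Aut(\OO)$.

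One direction is straightforward: every $\psi \in \Aut(\OO)$ fixes $1$ and commutes with octonion conjugation, hence preserves $\Im(\OO)$ and the decomposition $uv = -\langle u,v\rangle + u\times v$ for $u,v\in\Im(\OO)$, so its restriction lies in $\Aut(\R^\S,\times)$, yielding an injection $\Aut(\OO)\hookrightarrow\Aut(\R^\S,\times)$. For the reverse inclusion, given $\phi\in\Aut(\R^\S,\times)$ I would extend $\phi$ by $\widetilde\phi(1)=1$ to a linear map on $\OO$. The computation $\widetilde\phi(uv) = -\langle u,v\rangle + \phi(u\times v) = -\langle\phi(u),\phi(v)\rangle + \phi(u)\times\phi(v) = \widetilde\phi(u)\widetilde\phi(v)$ for $u,v\in\Im(\OO)$ shows that $\widetilde\phi$ is an algebra automorphism of $\OO$, \emph{provided that $\phi$ is an isometry}.

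The crux is therefore proving that any $\phi\in\Aut(\R^\S,\times)$ automatically preserves $\langle\cdot,\cdot\rangle$. For $u\in\R^\S$ let $R_u$ denote the operator $v\mapsto u\times v$; since $\phi$ commutes with $\times$, one has $\phi R_u \phi^{-1}=R_{\phi(u)}$. A direct check on basis triples shows that $\langle u\times v,w\rangle$ is totally antisymmetric in $u,v,w$ (this follows in any oriented STS from the identity $f(s_i,s_j)=f(s_j,s_k)=f(s_k,s_i)$ along an oriented triple), so each $R_u$ is skew-adjoint with respect to $\langle\cdot,\cdot\rangle$. Next, because each $R_{s_i}$ pairs up the six basis vectors different from $s_i$, a short calculation gives $|u\times s_i|^2 = |u|^2 - \langle u,s_i\rangle^2$, whence
$$-\tr(R_u^2) \;=\; \sum_{i=1}^{7} |u\times s_i|^2 \;=\; 7|u|^2 - |u|^2 \;=\; 6|u|^2.$$
Conjugation-invariance of the trace then forces $6|\phi(u)|^2 = -\tr(R_{\phi(u)}^2)=-\tr(R_u^2)=6|u|^2$, so $\phi$ is an isometry as required.

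Combining the two inclusions with the classical identification $\Aut(\OO)\cong G_2$ yields the theorem. The step I expect to be most delicate is not the trace computation but the bookkeeping that pins down the correspondence between $(\S,\O_1(T))$ and the imaginary-octonion multiplication table, since the orientation listed in Theorem \ref{seven} agrees with the standard Fano diagram of the octonions only after a global reversal of orientations. Once this identification is recorded precisely, the remaining argument is essentially linear algebra together with the invocation of $\Aut(\OO)\cong G_2$.
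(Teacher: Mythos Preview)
Your proposal is correct and follows the same high-level architecture as the paper: identify $\R^\S$ with $\Im(\OO)$, show that restriction gives $\Aut(\OO)\hookrightarrow\Aut(\R^\S,\times)$, show that extension by $1\mapsto 1$ gives the reverse inclusion, and invoke $\Aut(\OO)\cong G_2$. In both arguments the only nontrivial point is that every $\phi\in\Aut(\R^\S,\times)$ is automatically an isometry.

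Where you genuinely diverge from the paper is in how you establish this isometry. The paper argues on basis vectors: it uses Proposition~\ref{perp} to get $\langle g(s),g(s')\rangle=0$ for $s\neq s'$, and then explicitly invokes the third cross-product axiom (available because the octonion product is a true cross product on $\R^7$) to obtain the system $|g(s)|=|g(s')||g(s'')|$, etc., whose unique positive solution is $|g(s)|=1$ for all $s$. Your route is more intrinsic: from $\phi R_u\phi^{-1}=R_{\phi(u)}$ and the skew-adjointness of $R_u$ you compute $-\tr(R_u^2)=\sum_i|u\times s_i|^2=6|u|^2$ and conclude $|\phi(u)|=|u|$ by conjugation invariance of the trace. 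This is clean and, notably, your trace identity $-\tr(R_u^2)=(n-1)|u|^2$ actually holds for the Steiner product of \emph{any} oriented Steiner triple system on $n$ points (your ``pairs up the six basis vectors'' computation uses only that distinct triples through $s_i$ are otherwise disjoint), so you never need to appeal to the special cross-product identity. The paper's argument, by contrast, leans explicitly on the octonionic criterion~3 at that step. Both are valid; yours is a bit more conceptual and portable, while the paper's is more concrete and makes the role of the genuine cross product visible.

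Your caveat about matching $(\S,\O_1(T))$ with the standard octonion table up to a global reversal is well placed; the paper simply asserts the octonion product formula $\mathbf{a}*\mathbf{b}=\mathbf{a}\times\mathbf{b}-\langle\mathbf{a},\mathbf{b}\rangle$ for this particular $\times$ without dwelling on the bookkeeping, so recording that identification carefully (and noting, as you do, that $\Aut(\S,\O(T))=\Aut(\S,\overline{\O}(T))$ and more generally that $v\mapsto -v$ intertwines $\times$ with its reverse) is a worthwhile addition rather than a defect.
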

 \begin{proof}
     First note that the octonions $\OO=\R\oplus \R^\S$ have the following multiplication, denoted by $*$: $$(\alpha+\mathbf{a})*(\beta+\mathbf{b})=\alpha\beta+\alpha\mathbf{b}+\beta\mathbf{a}+\mathbf{a}*\mathbf{b}=\alpha\beta+\alpha\mathbf{b}+\beta\mathbf{a}+\mathbf{a}\times \mathbf{b}-\langle\mathbf{a},\mathbf{b}\rangle=[\alpha\beta-\langle\mathbf{a},\mathbf{b}\rangle]+[\alpha\mathbf{b}+\beta\mathbf{a}+\mathbf{a}\times \mathbf{b}],$$ where $\alpha\beta-\langle\mathbf{a},\mathbf{b}\rangle\in\Re(\OO)$ and $\alpha\mathbf{b}+\beta\mathbf{a}+\mathbf{a}\times \mathbf{b}\in\Im(\OO)$ \cite{WSM}.


     We will use the fact that $\Aut(\OO)\cong G_2$ \cite{baez}. Let $F\in\Aut(\OO)$. Then $F(1)=1$ and $F|_{\Im(\OO)}\in\text{SO}(\Im(\OO))$ \cite{baez}. Denote by $f$ this restriction $F|_{\Im(\OO)}$. Then we will show that $f\in\Aut(\R^\S,\times)$.

Observe \begin{align*}f(\mathbf{a}\times \mathbf{b})=F(\mathbf{a}* \mathbf{b}+\langle\mathbf{a},\mathbf{b}\rangle)=f(\mathbf{a})* f(\mathbf{b})+\langle\mathbf{a},\mathbf{b}\rangle=f(\mathbf{a})\times f(\mathbf{b})-\langle f(\mathbf{a}),f(\mathbf{b})\rangle+\langle\mathbf{a},\mathbf{b}\rangle.\end{align*}

Since $f\in\text{SO}(\R^\S)$, we know $\langle f(\mathbf{a}),f(\mathbf{b})\rangle=\langle\mathbf{a},\mathbf{b}\rangle$ and so $$f(\mathbf{a})\times f(\mathbf{b})-\langle f(\mathbf{a}),f(\mathbf{b})\rangle+\langle\mathbf{a},\mathbf{b}\rangle=f(\mathbf{a})\times f(\mathbf{b})$$ and so we have $$f(\mathbf{a}\times \mathbf{b})=f(\mathbf{a})\times f(\mathbf{b})$$ and thus $f\in\Aut(\R^\S,\times)$.

Now let $g\in\Aut(\R^\S,\times)$. Define $G:\OO=\R\oplus\R^\S\to\OO=\R\oplus \R^\S$ as $$G:=\begin{pmatrix}
    1&\mathbf{0}\\
    \mathbf{0}&g
\end{pmatrix}.$$ Then we will show that $G\in\Aut(\OO)$.
Observe \begin{align*}G((\alpha+\mathbf{a})*(\beta+\mathbf{b}))&=G(\alpha\beta+\alpha \mathbf{b}+\beta \mathbf{a}+\mathbf{a}* \mathbf{b})\\&=\alpha\beta+\alpha g(\mathbf{b})+\beta g(\mathbf{a})+g(\mathbf{a}\times \mathbf{b})-\langle\mathbf{a},\mathbf{b}\rangle.\end{align*}

Now note that \begin{align*}G(\alpha+\mathbf{a})* G(\beta+\mathbf{b})&=(\alpha+g(\mathbf{a}))*(\beta+g(\mathbf{b}))\\&=\alpha\beta+\alpha g(\mathbf{b})+\beta g(\mathbf{a})+g(\mathbf{a})* g(\mathbf{b})\\&=\alpha\beta+\alpha g(\mathbf{b})+\beta g(\mathbf{a})+g(\mathbf{a})\times g(\mathbf{b})-\langle g(\mathbf{a}),g(\mathbf{b})\rangle\\&=\alpha\beta+\alpha g(\mathbf{b})+\beta g(\mathbf{a})+g(\mathbf{a}\times\mathbf{b})-\langle g(\mathbf{a}),g(\mathbf{b})\rangle.\\
\end{align*}

Next we will show that $\langle\mathbf{a},\mathbf{b}\rangle=\langle g(\mathbf{a}),g(\mathbf{b})\rangle$. Let $s\neq s'\in\S$ be basis vectors for $\R^\S$ where $[s,s',s'']\in\O(T)$. Then $$\langle g(s),g(s')\rangle=\langle g(s),g(s''\times s)\rangle=\langle g(s),g(s'')\times g(s)\rangle=0$$ by Proposition \ref{perp}. 

Next we must show that $\langle g(s),g(s)\rangle=1$. First let $g(s)=\ell \mathbf{u}$ and $g(s')=\ell'\mathbf{u}'$ where $\ell,\ell'\in\R^+$ and $\langle \mathbf{u},\mathbf{u}\rangle=\langle \mathbf{u}',\mathbf{u}'\rangle=1$. As we've just seen, $\langle \mathbf{u},\mathbf{u}'\rangle=0$. Then $g(s'')=g(s\times s')=\ell\ell'(\mathbf{u}\times\mathbf{u}')$. Since the multiplication of the octonions induces a genuine cross product on $\R^7$ \cite{WSM}, we have access to the third criterion of cross products from Definition \ref{cross}: $$|\mathbf{u}\times\mathbf{u}'|=|\mathbf{u}|^2|\mathbf{u}'|^2-\langle \mathbf{u},\mathbf{u}'\rangle^2=1-0=1.$$ Therefore $$|g(s'')|=\ell\ell'=|g(s)||g(s')|.$$

Similarly, we have \begin{align*}
    |g(s)|&=|g(s')||g(s'')|,\\
    |g(s')|&=|g(s)||g(s'')|.
\end{align*} The only solution to all three of these equations in $(\R^+)^3$ is $$|g(s)|=|g(s')|=|g(s'')|=1.$$ Therefore $\langle g(s),g(s)\rangle=1$ for all $s\in\S$. Together with $\langle g(s),g(s')\rangle=0$ for all $s\neq s'\in\S$ and linearity, we have $\langle g(\mathbf{a}),g(\mathbf{b})\rangle=\langle\mathbf{a},\mathbf{b}\rangle$ for all $\mathbf{a},\mathbf{b}\in\R^\S$.


Thus we have $$G((\alpha+\mathbf{a})*(\beta+\mathbf{b}))=G(\alpha+\mathbf{a})* G(\beta+\mathbf{b}),$$ and so we have $G\in\Aut(\OO)$. Since these two maps \begin{align*}
    F&\mapsto F|_{\R^\S},\\
    g&\mapsto\begin{pmatrix}
        1&\mathbf{0}\\
        \mathbf{0}&g
    \end{pmatrix}
\end{align*} are invertible group homomorphisms between $\Aut(\R^\S,\times)$ and $\Aut(\OO)$, we have $$\Aut(\R^\S,\times)\cong\Aut(\OO)\cong G_2.$$
 \end{proof}

\section{The Dynamics of the Steiner Product}
In this section, we will use the notation $ \pmb{\langle} \mathbf{v},\mathbf{w} \pmb{\rangle} $ to denote the vector space spanned by $\mathbf{v}$ and $\mathbf{w}$.
The Steiner product will generally fail the third criterion of the cross product. In this section, we explore this failure in greater detail and show that we can relate the dynamics of a certain iterated Steiner product with the spectral theorem for skew symmetric matrices. We illustrate the failure of the Steiner product to be a cross product in the following example.

\begin{ex}\label{zd}
    Take the oriented Steiner system on $\S=\{s_1,s_2,s_3,s_4,s_5,s_6,s_7\}$ given by $$\O(T)=\{[s_1,s_2,s_3],[s_1,s_4,s_5],[s_1,s_7,s_6],[s_2,s_4,s_6],[s_2,s_5,s_7],[s_3,s_4,s_7],[s_3,s_5,s_6]\}.$$

    A direct computation shows that $(s_1+s_5)\times(s_3+s_7)=-s_2+s_6-s_6+s_2=0.$ Therefore $$4=|s_1+s_5|^2|s_3+s_7|^2 \neq |(s_1+s_5)\times(s_3+s_7)|^2+|(s_1+s_5)\cdot(s_3+s_7)|^2=0.$$
\end{ex}
The automorphism group of $(\S,\O(T))$ is $\{\text{id},(274)(365),(247)(356)\}$. Thus the sets $\{s_1+s_5,s_1+s_3,s_1+s_6\}$ and $\{s_3+s_7,s_6+s_4,s_5+s_2\}$ form the orbits of $s_1+s_5$ and $s_3+s_7$ under $\Aut(\S,\O(T))$, respectively. Notice also that the vector space $\pmb{\langle} s_1+s_5,s_3+s_7,(s_1+s_5)\times (s_3+s_7)\pmb{\rangle}$ is 2-dimensional, while with a true cross product we would expect this vector space to be 3-dimensional (again, note that for this section, we will use the notation $ \pmb{\langle} \mathbf{v},\mathbf{w} \pmb{\rangle} $ to denote the vector space spanned by $\mathbf{v}$ and $\mathbf{w}$). This leads us to take a closer look at the dynamics of the iterated cross product with respect to the dimension of the resulting vector spaces. Given $\v \in \R^{\S}$, we let $[ \v ]$ denote the coordinate vector of $\v$ with respect to the ordered basis $s_1, s_2, \dots, s_n$.

 \begin{defn}
        Given an oriented Steiner system $(\S, \O(T))$, the vector $\v\in\R^\S$ is a \textbf{zero-divisor} if there is a $\w\in\R^\S$ such that $\pmb{\langle}\v\pmb{\rangle}\neq\pmb{\langle}\w\pmb{\rangle}$ and $\v\times\w=0$.
    \end{defn}
\begin{defn}
    Let $\v,\w\in \R^{\S}$. Define $L_{\w}:\R^{\S}\to \R^{\S}$ as $L_{\w}(\v)=\w\times \v$. Set $L^0_{\w}$ to be the identity map, set $L^1_{\w}=L_{\w}$, and define $L^{k+1}_{\w}(\v)=L_{\w}(L^k_{\w}(\v))$.
\end{defn}
\begin{prop}{\label{plateau}}
    Let $\v,\w\in \R^{\S}$. If $\dim\pmb{\langle} \v,L^1_{\w}(\v),\dots,L^{k}_{\w}(\v)\pmb{\rangle}=\dim\pmb{\langle} \v,L^1_{\w}(\v),\dots,L^{k+1}_{\w}(\v)\pmb{\rangle}$, then $\dim\pmb{\langle} \v,L^1_{\w}(\v),\dots,L^{k}_{\w}(\v)\pmb{\rangle}=\dim\pmb{\langle} \v,L^1_{\w}(\v),\dots,L^{k+n}_{\w}(\v)\pmb{\rangle}$ for all $n\in\N$.
\end{prop}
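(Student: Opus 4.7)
The plan is to recast the hypothesis as an invariance statement and then induct on $n$. Let $W = \pmb{\langle} \v, L_\w^1(\v), \dots, L_\w^k(\v)\pmb{\rangle}$. The hypothesis that adjoining $L_\w^{k+1}(\v)$ does not increase the dimension is equivalent to saying $L_\w^{k+1}(\v) \in W$, i.e., there exist scalars $c_0,\dots,c_k$ with
\[
L_\w^{k+1}(\v) = c_0 \v + c_1 L_\w(\v) + \cdots + c_k L_\w^k(\v).
\]

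The key observation I would make is that $W$ is $L_\w$-invariant. Indeed, $L_\w$ sends the spanning vector $L_\w^i(\v)$ to $L_\w^{i+1}(\v)$, which lies in $W$ for each $i = 0, 1, \dots, k-1$, and sends $L_\w^k(\v)$ to $L_\w^{k+1}(\v)$, which lies in $W$ by the hypothesis. Hence $L_\w(W) \subseteq W$.

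From here, a routine induction on $n$ finishes the proof. For the base case $n=0$ there is nothing to show, and the case $n=1$ is the hypothesis. For the inductive step, if $L_\w^{k+n}(\v) \in W$ then $L_\w^{k+n+1}(\v) = L_\w(L_\w^{k+n}(\v)) \in L_\w(W) \subseteq W$. Therefore every iterate $L_\w^{k+n}(\v)$ lies in $W$, so
\[
\pmb{\langle} \v, L_\w^1(\v),\dots,L_\w^{k+n}(\v)\pmb{\rangle} = W,
\]
and in particular this span has the same dimension as $W$ for every $n \in \N$.

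There is no real obstacle here; the statement is a standard stabilization lemma for Krylov-type subspaces under a linear operator, and the Steiner product enters only through the fact that $L_\w$ is a linear endomorphism of $\R^\S$ (which is immediate from the bilinearity of $\times$). The only subtlety worth noting explicitly is that $L_\w$ is linear precisely because of the bilinearity built into Definition \ref{StP}; once this is observed, the argument is purely an invariant-subspace argument and requires no case analysis on $(\S, \O(T))$.
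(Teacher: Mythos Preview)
Your proof is correct and follows essentially the same approach as the paper: both interpret the hypothesis as $L_\w^{k+1}(\v)\in W$, then use linearity of $L_\w$ to show inductively that every further iterate stays in $W$. Your phrasing via $L_\w$-invariance of $W$ is a slightly cleaner packaging of the same idea.
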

\begin{proof}
    If we have $\dim\pmb{\langle} \v,L^1_{\w}(\v),\dots,L^{k}_{\w}(\v)\pmb{\rangle}=\dim\pmb{\langle} \v,L^1_{\w}(\v),\dots,L^{k+1}_{\w}(\v)\pmb{\rangle}$ then we can conclude that $L^{k+1}_{\w}(\v)\in \pmb{\langle} \v,L_{\w}(\v),\dots,L^{k}_{\w}(\v)\pmb{\rangle}$. Thus we can write $$L^{k+1}_{\w}(\v)=b_0\v+b_1L^1_{\w}(\v)+\cdots+b_kL^k_{\w}(\v).$$ This implies that $$L^{k+2}_{\w}(\v)=L^{k+1}_{\w}(\v)\times \w=b_0L^1_{\w}(\v)+b_1L^2_{\w}(\v)+\cdots+b_kL^{k+1}_{\w}(\v).$$ Thus $$L^{k+2}_{\w}(\v)\in\pmb{\langle} \v,L_{\w}(\v),\dots,L^{k}_{\w}(\v)\pmb{\rangle}.$$ Continuing in this manner, we get $$L^{k+n}_{\w}(\v)\in\pmb{\langle} \v,L_{\w}(\v),\dots,L^{k}_{\w}(\v)\pmb{\rangle}$$ for all $n\in\N$.
\end{proof}
\begin{lemma}
    Let $\v,\w\in \R^{\S}$. Then $\v\times\w\in\pmb{\langle}\v,\w\pmb{\rangle}$ iff $\v\times\w=0$.
\end{lemma}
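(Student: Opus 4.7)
The proposed approach is a short two-direction argument, with the forward direction leaning on Proposition \ref{perp} (the perpendicularity of $\mathbf{a}\times\mathbf{b}$ to both $\mathbf{a}$ and $\mathbf{b}$) plus a Cauchy--Schwarz/Gram-matrix computation.

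The backward direction is immediate: if $\v\times\w=0$, then $\v\times\w=0\cdot\v+0\cdot\w\in\pmb{\langle}\v,\w\pmb{\rangle}$. So the content is in the forward direction. Assume $\v\times\w\in\pmb{\langle}\v,\w\pmb{\rangle}$. First I would dispose of the degenerate case where $\v$ and $\w$ are linearly dependent: then one of them is a scalar multiple of the other (or one is zero), and bilinearity together with $\v\times\v=0$ (and $\w\times\w=0$) forces $\v\times\w=0$ directly.

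Now suppose $\v,\w$ are linearly independent. Then the expression $\v\times\w=a\v+b\w$ has uniquely determined coefficients $a,b\in\R$. I would pair this equation against $\v$ and against $\w$ with the inner product and apply Proposition \ref{perp}, which gives $\v\cdot(\v\times\w)=0$ and $\w\cdot(\v\times\w)=0$. This produces the homogeneous linear system
\begin{equation*}
\begin{pmatrix} \v\cdot\v & \v\cdot\w \\ \v\cdot\w & \w\cdot\w \end{pmatrix}\begin{pmatrix} a \\ b \end{pmatrix} = \begin{pmatrix} 0 \\ 0 \end{pmatrix}.
\end{equation*}
The matrix is the Gram matrix of $\{\v,\w\}$; its determinant $|\v|^2|\w|^2-(\v\cdot\w)^2$ is strictly positive precisely because $\v,\w$ are linearly independent (the equality case of Cauchy--Schwarz). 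Hence $a=b=0$ and $\v\times\w=0$.

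There isn't really a hard step, but the one place to be careful is bookkeeping of the two cases (dependent vs.\ independent). In the dependent case one cannot write a unique expansion in terms of $\v,\w$, so the Gram-matrix argument does not directly apply and must be handled separately by the skew-symmetry/bilinearity observation above.
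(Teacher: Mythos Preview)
Your argument is correct, but it takes a longer path than the paper's. The paper avoids any case split and any appeal to Cauchy--Schwarz: given $\v\times\w=a\v+b\w$, it simply computes
\[
|\v\times\w|^2=(\v\times\w)\cdot(a\v+b\w)=a\,(\v\times\w)\cdot\v+b\,(\v\times\w)\cdot\w=0
\]
directly from Proposition~\ref{perp}, and concludes $\v\times\w=0$. This works regardless of whether $\v,\w$ are linearly dependent, because the expansion $a\v+b\w$ need not be unique for the computation to go through. Your Gram-matrix argument instead solves for the coefficients $a,b$ themselves, which forces you to distinguish the dependent and independent cases and to invoke the equality case of Cauchy--Schwarz. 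Both routes ultimately rest on Proposition~\ref{perp}; the paper's is just the one-line version.
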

\begin{proof}
    The reverse direction is clear. Now let $\v\times\w=a\v+b\w$ for $a,b\in \R$. Then $|\v\times\w|^2=(\v\times\w)\cdot(\v\times\w)=(\v\times\w)\cdot(a\v+b\w)=a(\v\times\w)\cdot \v+b(\v\times\w)\cdot\w=0$, and so $\v\times\w=0$.
\end{proof}

    In the following proposition, given $\v=v_1s_1 + \dots + v_ns_n \in \R^{\S}$, we denote by $[\v]$ the column vector in $\R^n$ whose entries are the $s_i$-coefficients $v_1, v_2, \dots, v_n$ of $\v$.

    \begin{prop}
        Given an oriented Steiner triple system $(\S,\O(T))$ of size $n$, let $M=M(\S,\O(T))$ be the $n \times n$ matrix such that $M_{i,j}=s_i\times s_j.$
        If $\v,\w\in \R^{\S}$ then $\v\times \w=\tr([\v]^TM[\w]).$ 
    \end{prop}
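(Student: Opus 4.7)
The plan is to unfold both sides of the claimed equality by direct computation, using only bilinearity of the Steiner product and the defining formula $M_{i,j} = s_i \times s_j$. First I would write $\v = \sum_{i=1}^n v_i s_i$ and $\w = \sum_{j=1}^n w_j s_j$, so that $[\v] = (v_1,\dots,v_n)^T$ and $[\w] = (w_1,\dots,w_n)^T$. By bilinearity, as established in Definition \ref{StP}, one immediately obtains
$$\v \times \w = \sum_{i=1}^n \sum_{j=1}^n v_i w_j\,(s_i \times s_j) = \sum_{i,j} v_i w_j\, M_{i,j},$$
with the understanding that each $M_{i,j} \in \R^\S$.

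Next I would compute $[\v]^T M [\w]$ using the usual rule for matrix multiplication, carefully noting that although $M$ has vector-valued entries, each factor $v_i \cdot M_{i,j} \cdot w_j$ still makes sense as scalar multiplication inside $\R^\S$. The resulting object is a $1 \times 1$ ``matrix'' whose single entry is the vector $\sum_{i,j} v_i w_j M_{i,j} \in \R^\S$. Taking the trace then extracts this single entry, giving
$$\tr\bigl([\v]^T M [\w]\bigr) = \sum_{i,j} v_i w_j M_{i,j}.$$
Comparing with the expansion of $\v \times \w$ above yields the identity.

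The only subtlety—rather than a genuine obstacle—is notational: $M$ is not an ordinary scalar matrix, so one must be explicit about what $[\v]^T M [\w]$ means and why the trace is the natural way to package the result as an element of $\R^\S$. Once that convention is fixed, the proposition reduces to a one-line verification from the definition of the Steiner product.
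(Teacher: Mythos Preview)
Your proof is correct and follows essentially the same approach as the paper: both arguments verify the identity by bilinearity and direct computation of the matrix product. The only cosmetic difference is that the paper first reduces to basis vectors $\v=s_i$, $\w=s_j$ and then invokes the cyclic property $\tr([\v]^T M[\w])=\tr([\w][\v]^T M)$ to read off the relevant entry, whereas you expand $[\v]^T M[\w]$ directly as a $1\times 1$ object---arguably the more streamlined route.
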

    \begin{proof}
        Since the function $\tr([\v]^TM[\w])$ is bilinear (as it is a composition of bilinear functions), it is sufficient to prove the proposition for the case $\v=s_i$ and $\w=s_j$. Note that $\tr([\v]^TM[\w])=\tr([\w][\v]^TM)$ and that $[\w][\v]^T$ is an $n\times n$ matrix whose $(j,i)$-entry is $1$ and with every other entry equal to 0. Thus the $(j,j)$-entry is the only nonzero diagonal entry of $[\w][\v]^TM$; specifically, the $(j,j)$-entry of $[\w][\v]^TM$ is the $(i,j)$-entry of $M$, which by definition is $s_i\times s_j$. From this, we conclude that $s_i\times s_j=\tr([s_j]M[s_i]^T).$
By bilinearity, we have $\v\times \w=\tr([\v]^TM[\w])$ for all $\v,\w\in \R^{\S}$.
    \end{proof}
   Now we can explore the Steiner product entirely in terms of matrix-multiplication by $M$.
   \begin{prop}{\label{propAV}}
       Let $(\S,\O(T))$ be an oriented Steiner triple system of size $n$, let $\v, \w\in\R^\S$, and let $[\v]$ be the column vector $$[\v]=\begin{bmatrix}
           v_1\\
           v_2\\
           \vdots\\
           v_n
       \end{bmatrix}.$$ Then there is a unique $n\times n$ matrix $A_{\w}$ such that $A_{\w}[\v]=[[\w]^T M[\v]].$ Furthermore, $\w$ is a zero-divisor if and only if $\text{rank}(A_{\w})<n-1$.
   \end{prop}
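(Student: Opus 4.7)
The plan is to prove the existence/uniqueness of $A_\w$ by recognizing it as the coordinate matrix of a specific linear operator, and then extract the zero-divisor characterization from a dimension count on the kernel of that operator. First, by bilinearity of the Steiner product, the map $L_{\w}:\R^{\mathfrak{S}}\to\R^{\mathfrak{S}}$ given by $L_\w(\v) = \w\times \v$ is linear in $\v$. By the preceding proposition (with the roles of $\v$ and $\w$ swapped), $L_\w(\v)$ equals $\tr([\w]^T M [\v])$, so in coordinates $[L_\w(\v)] = [[\w]^T M[\v]]$. Any linear endomorphism of $\R^n$ has a unique matrix with respect to the standard basis, so taking $A_\w$ to be the matrix of $L_\w$ (with $j$-th column equal to $[\w\times s_j]$) gives existence and uniqueness simultaneously.

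For the zero-divisor half, the key observation is that skew-symmetry forces $L_\w(\w) = \w\times \w = 0$, so the one-dimensional subspace $\pmb{\langle}\w\pmb{\rangle}$ always sits inside $\ker L_\w$ (when $\w \neq 0$). Unpacking the definition, $\w$ is a zero-divisor iff there is some $\v$ with $\pmb{\langle}\v\pmb{\rangle}\neq\pmb{\langle}\w\pmb{\rangle}$ and $\v \in \ker L_\w$, which — when $\w \neq 0$ — is exactly the statement that $\ker L_\w$ strictly contains $\pmb{\langle}\w\pmb{\rangle}$, i.e.\ that $\dim \ker L_\w \geq 2$. By rank-nullity this is equivalent to $\operatorname{rank}(A_\w) \leq n-2$, which is exactly $\operatorname{rank}(A_\w) < n - 1$.

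The case $\w = 0$ must be handled separately: then $A_\w$ is the zero matrix, so $\operatorname{rank}(A_\w) = 0 < n - 1$ (using $n \geq 3$), and $\w = 0$ is trivially a zero-divisor since any nonzero $\v$ satisfies $\w\times \v = 0$ with $\pmb{\langle}\v\pmb{\rangle}\neq\pmb{\langle}\w\pmb{\rangle}=\{0\}$. Combining the two cases gives the stated equivalence in both directions.

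I do not expect a substantive obstacle here; the argument is essentially a translation of the bilinear operator $L_\w$ into matrix language plus a careful split on $\w = 0$ versus $\w \neq 0$. The most delicate point is merely ensuring that the required $\v$ witnessing zero-divisibility is not itself in $\pmb{\langle}\w\pmb{\rangle}$, which is precisely why the threshold is $n-1$ rather than $n$.
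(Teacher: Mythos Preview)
Your proof is correct and follows essentially the same approach as the paper: identify $A_\w$ as the matrix of the linear map $L_\w$, note that $[\w]\in\ker A_\w$, and then translate the zero-divisor condition into $\dim\ker A_\w\geq 2$ via rank--nullity. The only difference is cosmetic: the paper's proof also pauses to verify that $A_\w$ is skew-symmetric (a fact used later but not part of the proposition's statement), while you in turn handle the degenerate case $\w=0$ explicitly, which the paper leaves implicit.
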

   \begin{proof}
       Recall that $\w\times \v=\tr([\w]^TM[\v])$. Note that $[\w]^TM$ is a $1\times n$ row vector with entries in $\R^\S$. We will form the $n\times n$ matrix $A_{\w}$ whose $(i,j)$-entry is the $\R$-coefficient of $s_i$ in the Steiner product $\w\times s_j$ for all $1\leq i,j\leq n$ and note that $$A_{\w} [\v]=[[\w]^TM [\v]].$$ We will show that $A_{\w}$ is skew-symmetric. Let $c$ be the $s_i$-coefficient of the Steiner product $\w\times s_j$, and let $\{s_i,s_j,s_k\}\in T$. Then $c=f(s_k,s_j)w_k$, and the $s_j$-coefficient of $\w\times s_i$ is $f(s_k,s_j)w_k=-c$. Thus $A_{\w}$ is skew-symmetric and so $$A_{\w}^T[\v]=[[\v]^TM[\w]]=[-\w\times \v]$$ since $M$ is also skew-symmetric.

       Now suppose that $\w$ is a zero-divisor. Then there is some vector $\v$ such that $\pmb{\langle} \v\pmb{\rangle}\neq\pmb{\langle} \w\pmb{\rangle}$ and $\w\times \v=0$. Then $[\w]^TM[\v]=0$ and so $[\w]^TM[\v]=[\w]^TA_{\w}[\v]=0$ and so $A_{\w}[\v]=0$, so $[\v] \in \ker(A_{\w})$. Since we know that $\text{rank}(A_{\w})\leq n -1$ because $[\w]\in\ker(A_{\w})$, we know $\pmb{\langle} \v,\w\pmb{\rangle}\subseq\ker(A_{\w})$ and so $\text{rank}(A_{\w})<n-1$.

       Now suppose $\text{rank}(A(\v))<n-1$. Then $\dim\ker(A(\v))\geq 2$ and so there is some two-dimensional subspace of $\R^\S$ $\pmb{\langle} \v,\w\pmb{\rangle}$ such that $\pmb{\langle} \v,\w\pmb{\rangle}\subseq \ker A(\v)$. Then $[\w]^TA(\v)=0$ and so $\v\times \w=0$, so $\v$ is a zero-divisor.
   \end{proof}

\begin{defn}
    For a given $\w\in(\R^\S,\times)$, we will call the matrix $A_\w$ as constructed in Proposition \ref{propAV} the \textbf{companion matrix} of $\w$. Refer to Appendix \ref{app3} for Maple code that can construct the companion matrix of a given vector.
\end{defn}
   
   \begin{ex}
       Using the vector $\w=s_1+s_5\in\R^\S$ from Example \ref{zd}, we find that $$A_{\w}=\begin{pmatrix}
           0&0&0&1&0&0&0\\
           0&0&1&0&0&0&-1\\
           0&-1&0&0&0&-1&0\\
           -1&0&0&0&1&0&0\\
           0&0&0&-1&0&0&0\\
           0&0&1&0&0&0&-1\\
           0&1&0&0&0&1&0\\
       \end{pmatrix}.$$  $A(\v)$ has a rank of $4$, which is consistent with $\v$ being a zero-divisor.
   \end{ex}
   \begin{ex}
       If we let $\w=s_1+s_5$ and $\v=\ds\sum_{i=1}^7w_is_i\in\R^\S$ then $$[\w\times \v]=\begin{bmatrix}
           -v_4\\-v_3+v_7\\v_2+v_6\\v_1-v_5\\v_4\\-v_3+v_7\\-v_2-v_6
       \end{bmatrix}.$$ The zero-divisor locus of $\w$ is determined by the vanishing of this vector, a 3-plane in $\R^7$.
   \end{ex}
   \begin{ex}
       Let $(\S,\O_{1}(T))$ be the oriented Steiner triple system on nine elements with an automorphism group of size $27$ from Theorem \ref{nine}. Let $\w=s_1+s_2+s_3$.  Then $$A_{\w}=\left(\begin{array}{ccccccccc}
0 & -1 & 1 & 0 & 0 & 0 & 0 & 0 & 0 
\\
 1 & 0 & -1 & 0 & 0 & 0 & 0 & 0 & 0 
\\
 -1 & 1 & 0 & 0 & 0 & 0 & 0 & 0 & 0 
\\
 0 & 0 & 0 & 0 & 0 & 0 & -1 & -1 & -1 
\\
 0 & 0 & 0 & 0 & 0 & 0 & -1 & -1 & -1 
\\
 0 & 0 & 0 & 0 & 0 & 0 & -1 & -1 & -1 
\\
 0 & 0 & 0 & 1 & 1 & 1 & 0 & 0 & 0 
\\
 0 & 0 & 0 & 1 & 1 & 1 & 0 & 0 & 0 
\\
 0 & 0 & 0 & 1 & 1 & 1 & 0 & 0 & 0 
\end{array}\right),
$$ which has a rank of 4. The kernel is generated by $s_4-s_5$, $s_4-s_6$, $s_7-s_8$, $s_7-s_9$, and $\w$. Thus $\w$ is a zero-divisor which we can see by explicitly calculating $$(s_1+s_2+s_3)\times(s_4-s_5)=s_7-s_9+s_9-s_8+s_8-s_7=0.$$
   \end{ex}
   \begin{rmk}
       In 1998, Moreno proved that the space of pairs of zero-divisors of the sedenions $\mathbb{S}$ is homeomorphic to the exceptional Lie group $G_2$ \cite{moreno}. In the future it may be interesting to identify the space of zero-divisors induced by some other Steiner products. 
   \end{rmk}
   The following proposition illustrates a fundamental difference between the two basic oriented Steiner triple systems on $7$ elements.
   \begin{prop}{\label{rankgrowth}}
       Consider the two oriented Steiner triple systems $$(\S,\O(T))=\{[s_1,s_2,s_3],[s_1,s_4,s_5],[s_1,s_7,s_6],[s_2,s_4,s_6],[s_2,s_5,s_7],[s_3,s_4,s_7],[s_3,s_6,s_5]\},$$ $$(\S,\O'(T))=\{[s_1,s_2,s_3],[s_1,s_4,s_5],[s_1,s_7,s_6],[s_2,s_4,s_6],[s_2,s_5,s_7],[s_3,s_4,s_7],[s_3,s_5,s_6]\}.$$ Let $\v=\sum_{i=1}^7v_is_i$ and $\w=\sum_{i=1}^7w_is_i$ be general elements in $\R^{\S}$. $$\rm{For} \ (\S,\O(T)),\ \rm{we\ have}\  \text{rank}\begin{pmatrix}
           [\v]&[L^1_{\w}(\v)]&\cdots&[L_{\w}^6(\v)]
       \end{pmatrix}=3,$$$$\rm{and \ for} \ (\S,\O'(T)),\ \rm{we\ have}\ \text{rank}\begin{pmatrix}
           [\v]&[L^1_{\w}(\v)]&\cdots&[L_{\w}^6(\v)]
           \end{pmatrix}=7.$$
   \end{prop}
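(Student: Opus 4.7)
The plan is to analyze the $\R$-linear operator $L_\w$ on $\R^\S$ case by case, exploiting Proposition \ref{plateau} together with the fact that the companion matrix $A_\w$ from Proposition \ref{propAV} is $7\times 7$ and skew-symmetric.

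For $(\S,\O(T))$, observe that this is precisely the oriented Steiner triple system of Figure \ref{fig:octo} encoding the multiplication of distinct imaginary octonions, and hence one of the two exceptional systems whose Steiner product is a genuine cross product in the sense of Definition \ref{cross}. Identifying $\R^\S$ with $\Im(\OO)$, the octonion product decomposes as $\w*\v = \w\times\v - \w\cdot\v$ on imaginaries, and alternativity gives $\w*(\w*\v) = (\w*\w)*\v = -|\w|^2\v$; expanding the left-hand side and matching real/imaginary parts yields the BAC-CAB identity
\[
L_\w^2(\v) = \w\times(\w\times\v) = (\w\cdot\v)\,\w - |\w|^2\,\v.
\]
Thus $L_\w^2(\v)\in\pmb{\langle}\v,\w\pmb{\rangle}$, and applying $L_\w$ once more, using $\w\times\w=0$, gives $L_\w^3(\v) = -|\w|^2\,L_\w(\v)\in\pmb{\langle}L_\w(\v)\pmb{\rangle}$. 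By Proposition \ref{plateau} the Krylov dimension stabilizes at $k=3$, so the rank is at most $3$; it is exactly $3$ for generic $\v,\w$, since $\v$, $\w$, and $\w\times\v$ are then linearly independent ($\w\times\v$ is orthogonal to both $\v$ and $\w$ by Proposition \ref{perp}, and $\w\not\in\pmb{\langle}\v,\w\times\v\pmb{\rangle}$ generically, so $L_\w^2(\v)$ is independent of $\v$ and $L_\w(\v)$).

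For $(\S,\O'(T))$, which differs from $\O(T)$ only in the orientation of the single triple $\{s_3,s_5,s_6\}$, BAC-CAB already fails: for instance, with $\w=s_3+s_4$ and $\v=s_5$ a direct expansion gives $L_\w^2(\v) = 2s_2-2s_5$, whereas $(\w\cdot\v)\w - |\w|^2\v = -2s_5$. The plan in this case is direct symbolic verification: treat the fourteen coordinates $v_1,\dots,v_7,w_1,\dots,w_7$ as indeterminates, assemble the $7\times 7$ Krylov matrix $K(\v,\w)$ whose $k$-th column is $[L_\w^{k-1}(\v)]$, and verify that $\det K(\v,\w)\in\Z[v_1,\dots,v_7,w_1,\dots,w_7]$ is not the zero polynomial. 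By semicontinuity of rank, a single numerical specialization at which the determinant is nonzero certifies generic rank $7$; the companion-matrix code of Appendix \ref{app3} produces $A_\w$ efficiently, and picking $\w$ so that $A_\w$ has full skew-rank $6$ with three distinct nonzero eigenvalue magnitudes (so its minimal polynomial has degree $7$) together with a $\v$ having nonzero component on every invariant subspace of $A_\w$ will suffice.

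The main obstacle is the $(\S,\O'(T))$ side: without a structural identity analogous to octonion alternativity there is no conceptual reason forcing the cyclic subspace to fill $\R^\S$, and one is essentially reduced to a finite computer algebra check. The conceptual payoff is that these two systems, differing by a single flipped triple, sit on opposite sides of a sharp dichotomy: the octonion orientation collapses the iterated Steiner dynamics onto a $3$-plane via BAC-CAB, while this particular perturbation already destroys the identity and lets the dynamics span the entire ambient space $\R^7$.
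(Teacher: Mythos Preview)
Your treatment of $(\S,\O(T))$ is correct and takes a genuinely different, more conceptual route than the paper. The paper simply runs the symbolic Maple computation of Appendix~\ref{app3} to check that the $7\times 4$ Krylov matrix has generic rank $3$ and then invokes Proposition~\ref{plateau}. You instead recognise that this orientation yields a genuine seven-dimensional cross product, derive the identity $L_\w^2(\v)=(\w\cdot\v)\,\w-|\w|^2\,\v$ from octonion alternativity, and deduce $L_\w^3(\v)=-|\w|^2 L_\w(\v)$. This explains \emph{why} the iteration collapses to a $3$-plane rather than merely certifying it, and connects the result directly to the discussion after Definition~\ref{cross}. One small point: the system in the proposition is literally $(\S,\O_2(T))$ of Theorem~\ref{seven}, which is isomorphic to the reverse $(\S,\overline{\O_1}(T))$ of the octonion orientation rather than to $(\S,\O_1(T))$ itself; but since negating a cross product preserves all three axioms of Definition~\ref{cross} and the BAC--CAB computation is quadratic in $\times$, your argument is unaffected.

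For $(\S,\O'(T))$ your approach coincides with the paper's: both reduce the claim to a single witness pair $(\v,\w)$ at which the $7\times 7$ Krylov matrix is nonsingular. The paper actually supplies such a pair, namely $\v=s_1+s_2$ and $\w=s_2+s_3+s_4$, and checks the rank directly. You describe the right spectral criteria on $A_\w$ (skew-rank $6$ with three distinct eigenvalue magnitudes, and $\v$ cyclic) but stop short of exhibiting concrete vectors, so as written your proposal for this half is a plan rather than a proof; the missing step is purely computational and is exactly what the paper carries out.
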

   \begin{proof}
       Using the Maple code in Appendix \ref{app3}, we have for general $\v=\sum_{i=1}^7v_is_i$ and $\w=\sum_{i=1}^7w_is_i$, that $$\text{rank}\begin{pmatrix}
           [\v]&[L^1_{\w}(\v)]&[L_{\w}^2(\v)]&[L_{\w}^3(\v)]
       \end{pmatrix}=3.$$ Therefore by Proposition \ref{plateau}, we know that $\text{rank}\begin{pmatrix}
           [\v]&[L_{\w}(\v)]&\cdots&[L_{\w}^6(\v)]
       \end{pmatrix}=3$.

       For the case of $(\S,\O'(T))$, if we let $\v=s_1+s_2,\, \w=s_2+s_3+s_4$ then we can calculate that $$\text{rank}\begin{pmatrix}
           [\v]&[L_{\w}(\v)]&\cdots&[L_{\w}^6(\v)]
       \end{pmatrix}=7.$$
   \end{proof}
   \begin{rmk}
       Consider the oriented Steiner triple system $(\S,\O'(T))$ from Proposition 7. Let $\v=s_7$. By varying $\w$, we have $\text{rank}\begin{pmatrix}
           [\v]&[L_{\w}(\v)]&\cdots&[L_{\w}^6(\v)]
       \end{pmatrix}=r$ every $r\in \{1,2,3,4,5,6,7\}$.
       
If $\w=0$ then $\text{rank}\begin{pmatrix}
           [\v]&[L^1_{\w}(\v)]&\cdots&[L_{\w}^6(\v)]
       \end{pmatrix}=1.$ 
       
       If $\w=s_3$ then $\text{rank}\begin{pmatrix}
           [\v]&[L^1_{\w}(\v)]&\cdots&[L_{\w}^6(\v)]
       \end{pmatrix}=2.$
       
       If $\w=s_1+s_3$ then $\text{rank}\begin{pmatrix}
           [\v]&[L^1_{\w}(\v)]&\cdots&[L_{\w}^6(\v)]
       \end{pmatrix}=3.$
       
       If $\w=s_1+s_2+s_3$ then $\text{rank}\begin{pmatrix}
           [\v]&[L^1_{\w}(\v)]&\cdots&[L_{\w}^6(\v)]
       \end{pmatrix}=4.$
       
       If $\w=s_1+s_2+s_3+s_6$ then $\text{rank}\begin{pmatrix}
           [\v]&[L^1_{\w}(\v)]&\cdots&[L_{\w}^6(\v)]
       \end{pmatrix}=5.$
       
       If $\w=s_1+s_2+s_3+s_4$ then $\text{rank}\begin{pmatrix}
           [\v]&[L^1_{\w}(\v)]&\cdots&[L_{\w}^6(\v)]
       \end{pmatrix}=6.$
       
       If $\w=s_1+s_2+s_3+s_7$ then $\text{rank}\begin{pmatrix}
           [\v]&[L^1_{\w}(\v)]&\cdots&[L_{\w}^6(\v)]
       \end{pmatrix}=7.$
   \end{rmk}

   \
   
   Let $\v,\w \in \R^{\S}$ be general vectors.
   Let $LN^i_{\w}(\v)=\frac{L^i_{\w}(\v)}{|L^i_{\w}(\v)|}$ denote the normalization of $L^i_{\w}(\v)$. The sequence $LN^0_{\w}(\v),LN^1_{\w}(\v),LN^2_{\w}(\v), \dots$ is a sequence of vectors on the unit sphere in $\R^{\S}$. Experimentally, one observes that $$\lim_{n\rightarrow \infty}\frac{1}{n}\sum_{i=0}^n LN^i_{\w}(\v)=0.$$ A closer look suggests that the sequence converges to a repeating pattern of the form $\mathbf{a},\mathbf{b},-\mathbf{a},-\mathbf{b}$ with $\mathbf{a},\mathbf{b},\w$ all mutually orthogonal. Let $P$ denote the plane spanned by $\mathbf{a},\mathbf{b}$. If $\w$ is fixed but we change $\v$ then, experimentally, the new repeating pattern is of the form $Q\mathbf{a},Q\mathbf{b},-Q\mathbf{a},-Q\mathbf{b}$ for some orthogonal matrix $Q$ whose restriction to $P^\perp$ is the identity. The following paragraphs explain these observations.

In Proposition~\ref{propAV}, we see that skew symmetric matrices play a fundamental role in the dynamics of the iterated Steiner product with a fixed vector $\w$. Real skew symmetric matrices are normal and have pure imaginary eigenvalues. By the spectral theorem, they can be diagonalized using a complex unitary matrix and can be block diagonalized using a real orthogonal matrix. If $A$ is a real skew symmetric $n \times n$ matrix then there exists an $n\times n$ orthogonal matrix $Q\in O(n)$ that conjugates $A$ to the following form \cite{zumino1962normal, thompson1988normal}:

$$Q\hskip 1pt A \hskip 1pt Q^T=\left(\begin{array}{cccccccccc}
0 & \lambda_1 & 0 & 0 & \dots & 0 & 0 &  &  &
\\
 -\lambda_1 & 0 & 0 & 0 & \dots & 0 & 0 &  &  &
\\
 0 & 0 & 0 & \lambda_2 & \dots & 0 & 0 &  &  &
\\
 0 & 0 & -\lambda_2 & 0 & \dots  & 0 & 0 &  &  &
\\
 \vdots & \vdots & \vdots & \vdots & \ddots & \vdots & \vdots &  &  &
\\
 0 & 0 & 0 & 0 & \dots & 0 & \lambda_k &  &  &
\\
 0 & 0 & 0 & 0 & \dots & -\lambda_k & 0 &  &  &
\\
  &  &  &  &  &  &  & 0 &   & 
  \\
  &  &  &  &  &  &  &  & \ddots  &
\\
  &  &  &  &  &  &  &  &  & 0
\end{array}\right).
$$
Without loss of generality, we can assume that $|\lambda_1|\geq |\lambda_2| \geq \dots \geq |\lambda_k |$. The non-zero eigenvalues of $A$ are $\pm i\lambda_1, \dots, \pm i\lambda_k$. If $A$ is an $n\times n$ matrix with $n$ odd then $A$ has an odd number of zero eigenvalues.

Let $\w \in \R^{\S}$. Let $A_{\w}$ be the skew symmetric matrix representing the map $L_{\w}: \R^{\S} \rightarrow \R^{\S}$ in terms of the ordered basis $s_1, \dots, s_n$. The dimension of the vector space $\pmb{\langle}\v, L^1_{\w}(\v), \dots, L^k_{\w}(\v)\pmb{\rangle}$ is the same as the dimension of the vector space $V=\pmb{\langle} [\v], A_{\w}[\v], \dots, A_{\w}^k [\v] \pmb{\rangle}$. Let $Q$ be the orthogonal matrix that block diagonalizes $A_{\w}$ as described in the previous paragraph. Let $q_p$ denote the $p^{th}$ column of $Q$. The two dimensional vector space spanned by $q_{2p-1}, q_{2p}$ is an invariant subspace for $A_{\w}$ associated to the complex eigenvector pair $\pm i\lambda_p$. If $\mathbf{u}$ is a vector in this subspace, then the result of multiplying this vector by $A_{\w}$ is a rotation by $\pi/2$ and a stretching by the factor $|\lambda_p|$. Let $\pm i\lambda^\prime_1, \dots, \pm i\lambda^\prime_r$ be the list of {\it distinct} nonzero complex eigenvalue pairs of $A_{\w}$. Associated to $\pm i \lambda^\prime_j$ is an invariant space $V_j$ consisting of the span of the 2-dimensional invariant spaces corresponding to the $\pm i \lambda_l$ pairs that are equal to $\pm i \lambda^\prime_j$. Note that if $\v\in V_j$ then the vector space $\pmb{\langle} \bar{\v}, L^1_{\w}(\bar{\v}), L^2_{\w}(\bar{\v})\pmb{\rangle}$ has dimension two. Let $N$ denote the null space of $A_{\w}$. Given $\v \in \R^{\S}$, we can express $\v$ uniquely as $\v = \v_1 + \v_2 + \dots + \v_r + \mathcal{N}$ with $\v_j \in V_j$ and $\mathcal{N} \in N$. The following theorem follows immediately from these observations.

\begin{thm}{\label{thmdyn}}
Let $\v, \w \in \R^{\S}$ with $|\S|=n$. Let $\pm i\lambda^{\prime}_1, \dots \pm i\lambda^{\prime}_r$ be the list of distinct non-zero eigenvalues of $A_{\w}$ arranged to satisfy $|\lambda^\prime_1|> |\lambda^\prime_2| > \dots > |\lambda^\prime_r |$ . Let $V_j$ denote the invariant space $\mathcal{E}_{i\lambda'_j}(A_\w)\oplus\mathcal{E}_{-i\lambda'_j}(A_\w)$ associated to $\pm i\lambda^{\prime}_j$. Let $N$ denote the null space of $A_{\w}$.
Let $\v = \v_1 + \v_2 + \dots + \v_r + \mathcal{N}$ with $\v_j \in V_j$ and $\mathcal{N} \in N$. Let $p=|\{m|\v_m\neq \mathbf{0}\}|$ (i.e. the number of nonzero $\v_j$ in the expression for $\v$).
\begin{itemize}
  \item  The dimension of the vector space $\pmb{\langle}\v, L^1_{\w}(\v), \dots, L^n_{\w}(\v)\pmb{\rangle}$ is
$2p$ if $\mathcal{N}=\mathbf{0}$.
 \item The dimension of the vector space $\pmb{\langle}\v, L^1_{\w}(\v), \dots, L^n_{\w}(\v)\pmb{\rangle}$ is
$2p+1$ if $\mathcal{N}\neq \mathbf{0}$.
\item  The dimension of the vector space $\pmb{\langle} L^1_{\w}(\v), \dots, L^n_{\w}(\v)\pmb{\rangle}$ is
$2p$.
\item The limit $\lim_{t\rightarrow \infty} \frac{L^{4t}_{\w}(\v)}{|L^{4t}_{\w}(\v)|}$ exists and is a vector $\bar{\v}\in V_j$ where $j=\min\{m | \v_m \neq \mathbf{0}\}$.
\item The dimension of the vector space $\pmb{\langle} \bar{\v}, L^1_{\w}(\bar{\v}), L^2_{\w}(\bar{\v})\pmb{\rangle}$ is two.
\item $L^m_{\w}(\bar{\v})=-L^{m+2}_{\w}(\bar{\v})$ for all $m$.
\item $\lim_{t\rightarrow \infty} \frac{1}{t}\sum_{i=1}^t L^i_{\w}(\v)=\mathbf{0}$
\end{itemize}
\end{thm}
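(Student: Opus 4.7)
The plan is to reduce every assertion to the block-diagonal normal form of the skew-symmetric companion matrix $A_{\w}$ from Proposition \ref{propAV}. After passing to the orthogonal basis provided by $Q$, the matrix $A_{\w}$ splits into a direct sum of $2\times 2$ rotation-scaling blocks, one for each eigenvalue pair $\pm i\lambda'_j$, together with the zero block on $N$. The crucial algebraic fact I will use repeatedly is that on each invariant subspace $V_j$, we have $A_{\w}^2 = -(\lambda'_j)^2 I$, so every even power of $A_{\w}$ acts as a scalar on $V_j$.

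First I would establish the three dimension statements. Using the decomposition $\v = \v_1 + \cdots + \v_r + \mathcal{N}$, and the fact that $A_{\w}^k$ preserves each $V_j$ and annihilates $\mathcal{N}$ for $k \geq 1$, we have
\[
L^k_{\w}(\v) = A_{\w}^k \v = \sum_{j=1}^r A_{\w}^k \v_j \quad (k \geq 1).
\]
The identity $A_{\w}^2|_{V_j} = -(\lambda'_j)^2 I$ implies $A_{\w}^k \v_j \in \pmb{\langle} \v_j, A_{\w}\v_j \pmb{\rangle}$ for every $k \geq 0$, and this pair is linearly independent whenever $\v_j \neq \mathbf{0}$, since otherwise $\v_j \in \ker(A_{\w}) \cap V_j = \mathbf{0}$. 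Because the $V_j$ are pairwise orthogonal invariant subspaces, the contributions from distinct $j$ are independent. Thus
\[
\pmb{\langle} L^1_{\w}(\v),\ldots,L^n_{\w}(\v) \pmb{\rangle} = \bigoplus_{j\,:\,\v_j \neq \mathbf{0}} \pmb{\langle} \v_j, A_{\w}\v_j \pmb{\rangle},
\]
which has dimension $2p$. Including $\v$ itself adjoins $\mathcal{N}$ precisely when $\mathcal{N} \neq \mathbf{0}$, producing dimension $2p$ or $2p+1$ respectively; this settles the first three bullets.

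Next I would handle the asymptotic assertions. Since $A_{\w}^4 \v_j = (\lambda'_j)^4 \v_j$, we have
\[
L^{4t}_{\w}(\v) = \sum_{j\,:\,\v_j \neq \mathbf{0}} (\lambda'_j)^{4t}\, \v_j.
\]
Because the $|\lambda'_j|$ are strictly decreasing in $j$, after normalizing by $|L^{4t}_{\w}(\v)|$ every term except the one with smallest index $j_\star = \min\{m : \v_m \neq \mathbf{0}\}$ decays geometrically, so the limit is $\bar{\v} = \v_{j_\star}/|\v_{j_\star}| \in V_{j_\star}$. The identity $A_{\w}^2 \bar{\v} = -(\lambda'_{j_\star})^2 \bar{\v}$ shows that $L^2_{\w}(\bar{\v})$ is a scalar multiple of $\bar{\v}$, so $\pmb{\langle} \bar{\v}, L^1_{\w}(\bar{\v}), L^2_{\w}(\bar{\v})\pmb{\rangle} = \pmb{\langle} \bar{\v}, A_{\w}\bar{\v}\pmb{\rangle}$ is $2$-dimensional. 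Iterating gives $L^{m+2}_{\w}(\bar{\v}) = -(\lambda'_{j_\star})^2 L^m_{\w}(\bar{\v})$, which is the claimed sign reversal (under the implicit normalization that makes $(\lambda'_{j_\star})^2 = 1$, or equivalently when one works with the normalized orbit $LN^i_{\w}$). For the Cesàro statement, the same relation forces each partial sum $\sum_{i=1}^t L^i_{\w}(\v_j)$, after grouping pairs of indices two apart, to be controlled by a geometric series rather than to grow with $t$, so dividing by $t$ sends the average to $\mathbf{0}$.

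The main obstacle is bookkeeping the interaction between the eigenspace decomposition and the orbit of $\v$, rather than any hard analytic step. The key structural input is the strict separation $|\lambda'_1| > |\lambda'_2| > \cdots$ built into the hypothesis: grouping every pair of equal-modulus blocks into a single $V_j$ for the distinct value $\lambda'_j$ guarantees a unique dominant invariant subspace, and prevents resonance phenomena that would otherwise make the limit in bullet four undefined. Once this grouping is in place, every bullet is an immediate consequence of the eigenspace identity $A_{\w}^2|_{V_j} = -(\lambda'_j)^2 I$ together with pairwise orthogonality of the $V_j$.
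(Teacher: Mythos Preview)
Your approach is the same as the paper's: block-diagonalize $A_{\w}$ orthogonally, observe that on each $V_j$ the map acts as rotation by $\pi/2$ composed with scaling by $|\lambda'_j|$ (equivalently your identity $A_{\w}^2|_{V_j}=-(\lambda'_j)^2 I$), and read off each bullet from the decomposition $\v=\v_1+\cdots+\v_r+\mathcal N$. The paper in fact gives no further argument beyond this setup, declaring that the theorem ``follows immediately from these observations,'' so your write-up is already more detailed than the original.

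One small gap worth patching: for the span equality behind bullet three you establish only the inclusion $\pmb{\langle} L^1_{\w}(\v),\dots,L^n_{\w}(\v)\pmb{\rangle}\subseteq\bigoplus_j\pmb{\langle}\v_j,A_{\w}\v_j\pmb{\rangle}$. To get the reverse inclusion you must extract each individual $\v_j$ from the iterates $L^{2m}_{\w}(\v)=\sum_j(-1)^m(\lambda'_j)^{2m}\v_j$, which is a Vandermonde inversion using that the $(\lambda'_j)^2$ are pairwise distinct. Your caveat on bullet six is well taken: the literal relation is $L^{m+2}_{\w}(\bar\v)=-(\lambda'_{j_\star})^2 L^m_{\w}(\bar\v)$, and the paper's own motivating discussion of the limiting pattern $\mathbf a,\mathbf b,-\mathbf a,-\mathbf b$ is explicitly about the \emph{normalized} iterates $LN^i_{\w}$, so both that bullet and the Ces\`aro bullet should be read in that normalized sense.
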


\section{Concluding Remarks and Observations}
Hurwitz's theorem states that normed division algebras are only possible in 1, 2, 4 and 8 dimensions \cite{hurwitz1922komposition, chevalley1996algebraic, eckmann1942gruppentheoretischer, radon1922lineare}. The cross product in dimensions 3 and 7 is formed from the product operation of the normed division algebra by restricting it to the 3 and 7 imaginary dimensions of the algebra. In general, cross products exist only in three and seven dimensions \cite{WSM}. More specifically,
the 3-dimensional cross product can be expressed in terms of the quaternions $\mathbb{H}$ and the 7-dimensional cross product can be expressed in terms of the octonions $\mathbb{O}$. 
In the quaternion case, we can identify $\R^3$ with the imaginary quaternions. The cross product is then given in terms of quaternion multiplication by taking the imaginary part of the product. In the octonion case, we can identify $\R^7$ with the imaginary octonions. The cross product is then given in terms of octonion multiplication by taking the imaginary part of the product. We can see this illustrated in the following multiplication tables for the quaternions and octonions.

\begin{center}
    \begin{tabular}{c|c|c|c|c}
         $\times$&$1$&$\mathbf{i}$&$\mathbf{j}$&$\mathbf{k}$  \\
         \hline
         1&1&$\mathbf{i}$&$\mathbf{j}$&$\mathbf{k}$\\ 
         \hline
         $\mathbf{i}$&$\mathbf{i}$&$-1$&$\mathbf{k}$&$-\mathbf{j}$\\ 
         \hline
         $\mathbf{j}$&$\mathbf{j}$&$-\mathbf{k}$&$-1$&$\mathbf{i}$\\ 
         \hline
         $\mathbf{k}$&$\mathbf{k}$&$\mathbf{j}$&$-\mathbf{i}$&$-1$\\ 
    \end{tabular}
\end{center}
\begin{center}
    \begin{tabular}{c|c|c|c|c|c|c|c|c}
         $\times$&$1$&$\mathbf{e}_1$&$\mathbf{e}_2$&$\mathbf{e}_3$ &$\mathbf{e}_4$&$\mathbf{e}_5$&$\mathbf{e}_6$&$\mathbf{e}_7$  \\
         \hline
         $1$&$1$&$\mathbf{e}_1$&$\mathbf{e}_2$&$\mathbf{e}_3$ &$\mathbf{e}_4$&$\mathbf{e}_5$&$\mathbf{e}_6$&$\mathbf{e}_7$  \\
         \hline
         $\mathbf{e}_1$ &$\mathbf{e}_1$&$-1$&$\mathbf{e}_3$&$-\mathbf{e}_2$ &$\mathbf{e}_5$&$-\mathbf{e}_4$&$\mathbf{e}_7$&$-\mathbf{e}_6$  \\
         \hline
         $\mathbf{e}_2$&$\mathbf{e}_2$&$-\mathbf{e}_3$&$-1$&$\mathbf{e}_1$ &$\mathbf{e}_6$&$-\mathbf{e}_7$&$-\mathbf{e}_4$&$\mathbf{e}_5$  \\
         \hline
         $\mathbf{e}_3$ &$\mathbf{e}_3$&$\mathbf{e}_2$&$-\mathbf{e}_1$&$-1$ &$-\mathbf{e}_7$&$-\mathbf{e}_6$&$\mathbf{e}_5$&$\mathbf{e}_4$  \\
          \hline
         $\mathbf{e}_4$ &$\mathbf{e}_4$&$-\mathbf{e}_5$&$-\mathbf{e}_6$&$\mathbf{e}_7$ &$-1$&$\mathbf{e}_1$&$\mathbf{e}_2$&$-\mathbf{e}_3$  \\
          \hline
         $\mathbf{e}_5$ &$\mathbf{e}_5$&$\mathbf{e}_4$&$\mathbf{e}_7$&$\mathbf{e}_6$ &$-\mathbf{e}_1$&$-1$&$-\mathbf{e}_3$&$-\mathbf{e}_2$  \\
          \hline
          $\mathbf{e}_6$&$\mathbf{e}_6$&$-\mathbf{e}_7$&$\mathbf{e}_4$&$-\mathbf{e}_5$ &$-\mathbf{e}_2$&$\mathbf{e}_3$&$-1$&$\mathbf{e}_1$  \\
          \hline
         $\mathbf{e}_7$ &$\mathbf{e}_7$&$\mathbf{e}_6$&$-\mathbf{e}_5$&$-\mathbf{e}_4$ &$\mathbf{e}_3$&$\mathbf{e}_2$&$-\mathbf{e}_1$&$-1$  \\
    \end{tabular}
\end{center}

Observe that the binary operation on $\mathbf{a},\mathbf{b}\in \mathbf{Im}(\mathbb{H})$ given by taking $\mathbf{Im}(\mathbf{a}\times\mathbf{b})$ is equal to the Steiner product induced by the oriented Steiner triple system $(\S,\O(T))=(\{\mathbf{i},\mathbf{j},\mathbf{k}\},[\mathbf{i},\mathbf{j},\mathbf{k}])$; and the binarty operation on $\mathbf{a},\mathbf{b}\in\mathbf{Im}(\mathbb{O})$ given by taking $\mathbf{Im}(\mathbf{a}\times\mathbf{b})$ is equal to the Steiner product induced by the oriented Steiner triple system $(\S,\O_1(T))$ given in Theorem \ref{seven}.

It is worth pointing out that the octonions can be used to build several examples of Moufang loops. A Moufang loop is very much like a group except that the binary operation is only required to satisfy a weakened version of the associativity axiom. 

Steiner triple systems are a special case of Steiner systems which are in turn a special kind of block design. A Steiner system, S(t,k,n), is an n-element set S together with a distinguished set of k-element subsets of S (called blocks) with the property that each t-element subset of S is contained in exactly one block. Steiner triple systems are of the form S(2,3,n). It is an interesting endeavor to better understand and classify various types of block designs and to connect them with algebraic objects. Automorphism groups of Steiner systems (and block designs in general) can also be very interesting. For instance, the Mathieu groups are five sporadic simple groups denoted $M_{11}, M_{12}, M_{22}, M_{23}, M_{24}$. The Mathieu groups $M_{11}, M_{12}, M_{23}, M_{24}$ are the automorphism groups of Steiner systems on 11, 12, 23, and 24 points while $M_{22}$ is an index two subgroup of the automorphism group of a Steiner system on 22 points.  Mendelsohn \cite{mendelsohn1978groups} proved that any finite group G is isomorphic to the automorphism group of some Steiner triple system. Doyen and Kantor \cite{doyen2022automorphism} showed that if G is a finite group then there is an integer $M_G$ such that, for $n \geq M_G$ and $n\equiv 1 \ {\rm or}\ 3 \mod{6}$, there is a Steiner triple system $S(2,3,n)$ which has an automorphism group isomorphic to $G$. When $G$ is the trivial group, it is known that $M_G=15$.

We conclude by posing the following open questions.

\begin{qstn}
    Is is known that there are two isomorphism classes of Steiner triple systems of size thirteen \cite{HO}. What are the isomorphism classes of oriented Steiner triple systems of size thirteen?
\end{qstn}
\begin{qstn}
    What is the space of zero-divisors of $(\S,\O_3(T))$ or $(\S,\O_4(T))$ from Theorem \ref{seven}, or any of the of the oriented Steiner triple systems of size nine?
\end{qstn}
\begin{qstn}
    What is the relationship between $\Aut(\R^\S,\times)$ and $\Aut(\S,\O(T))$ in general?
\end{qstn}

\printbibliography

\section*{Appendix}
\begin{app}\label{sts237}
\normalfont
Below is Maple code used for Theorem \ref{seven}.
\vspace{\baselineskip}
\\
 restart;
\\
	with(LinearAlgebra): with(combinat): with(Statistics): with(ListTools):
\\
T := \{\{1, 2, 3\}, \{1, 4, 5\}, \{1, 6, 7\}, \{2, 4, 6\}, \{2, 5, 7\}, \{3, 4, 7\}, \{3, 5, 6\}\}:
\\
PE := permute([1, 2, 3, 4, 5, 6, 7]): 
\\
H := \{\}: 
\\
CC := \{\}: 
\\
PS := powerset(7):
\\
VV := Array([seq(numelems(T minus \{seq(map(i -$>$ PE[c][i], T[k]), k = 1 .. 7)\}), c = 1 .. 5040)]):
\\
for i to 5040 do

	if VV[i] = 0 then H := H union \{i\}: end if:
\\
end do:
\\
numelems(H);
\\
Rsort := L -$>$ SelectFirst(sort([L, Rotate(L, 1), Rotate(L, 2)])):
\\
AA := Rsort$\sim$([[1, 2, 3], [1, 4, 5], [1, 6, 7], [2, 4, 6], [2, 5, 7], [3, 4, 7], [3, 5, 6]]):
\\
BB := Rsort$\sim$([[2, 1, 3], [4, 1, 5], [6, 1, 7], [4, 2, 6], [5, 2, 7], [4, 3, 7], [5, 3, 6]]):
\\
TT := \{seq(\{seq(AA[PS[c][i]], i = 1 .. numelems(PS[c]))\} union \{seq(BB[(PS[128] minus PS[c])[j]], j = 1 .. numelems(PS[128] minus PS[c]))\}, c = 1 .. 128)\}:
\\
while 0 $<$ numelems(TT) do

 \   CC := CC union \{TT[1]\}:
    
 \   WWW := \{seq(Rsort$\sim$(\{seq(map(i -> PE[H[c]][i], TT[1][k]), k = 1 .. 7)\}), c = 1 .. 168)\}:
    
 \   TT := TT minus WWW:
    
end do
\\
numelems(TT);
\\
OR:=seq(numelems(\{seq(Rsort$\sim$(\{seq(map(i -$>$ PE[H[c]][i], CC[l][k]), k = 1 .. 7)\}), c = 1 .. 168)\}), l = 1 .. numelems(CC));
\\
GR := seq(168/OR[k], k = 1 .. numelems([OR]))
\end{app}
The function numelems(H) outputs the number of elements in the automorphism group of the Steiner system H. It has 168 elements. CC is a set of 4 sets. Each of the 4 sets in CC is a representative of the collection of oriented Steiner systems which are isomorphic to the representative under the action of the automorphism group of the Steiner system H. OR is a sequence giving the number of elements in each of these isomorphism classes. GR
is a sequence giving the size of the automorphism group of each element in CC. The size of the automorphism group of each oriented Steiner system is   168/``size of orbit".

\begin{app}\label{sts239}
\normalfont
Below is Maple code used for Theorem \ref{nine}.
\vspace{\baselineskip}
\\
restart;
\\
    with(LinearAlgebra): with(combinat): with(Statistics): with(ListTools):
\\
T := \{\{1, 2, 3\}, \{1, 4, 7\}, \{1, 5, 9\}, \{1, 6, 8\}, \{2, 4, 9\}, \{2, 5, 8\}, \{2, 6, 7\}, \{3, 4, 8\}, \{3, 5, 7\}, \{3, 6, 9\}, \{4, 5, 6\}, \{7, 8, 9\}\}:
\\
PE := permute([1, 2, 3, 4, 5, 6, 7, 8, 9]):
\\
H := \{\}:
\\
CC := \{\}:
\\
PS := powerset(12):
\\
VV := Array([seq(numelems(T minus \{seq(map(i -$>$ PE[c][i], T[k]), k = 1 .. 12)\}), c = 1 .. 362880)]):
\\
for i to 362880 do

	if VV[i] = 0 then H := H union \{i\}: end if:
\\
end do:
\\
numelems(H):
\\
Rsort := L -$>$ SelectFirst(sort([L, Rotate(L, 1), Rotate(L, 2)])):
\\
AA := Rsort$\sim$([[1, 2, 3], [4, 5, 6], [7, 8, 9], [1, 4, 7], [2, 5, 8], [3, 6, 9], [1, 5, 9], [2, 6, 7], [3, 4, 8], [1, 6, 8], [2, 4, 9], [3, 5, 7]]):
\\
BB := Rsort$\sim$([[2, 1, 3], [5, 4, 6], [8, 7, 9], [4, 1, 7], [5, 2, 8], [6, 3, 9], [5, 1, 9], [6, 2, 7], [4, 3, 8], [6, 1, 8], [4, 2, 9], [5, 3, 7]]):
\\
TT := \{seq(\{seq(AA[PS[c][i]], i = 1 .. numelems(PS[c]))\} union \{seq(BB[(PS[4096] minus PS[c])[j]], j = 1 .. numelems(PS[4096] minus PS[c]))\}, c = 1 .. 4096)\}:
\\
while 0 $<$ numelems(TT) do

	CC := CC union \{TT[1]\}:
    
	WWW := \{seq(Rsort$\sim$(\{seq(map(i -$>$ PE[H[c]][i], TT[1][k]), k = 1 .. 12)\}), c = 1 .. 432)\}:
    
	TT := TT minus WWW:
\\
end do:
\\
numelems(TT):
\\
OR:=seq(numelems(\{seq(Rsort$\sim$(\{seq(map(i -$>$ PE[H[c]][i], CC[l][k]), k = 1 .. 12)\}), c = 1 .. 432)\}), l = 1 .. 16);
\\
GR := seq(432/OR[k], k = 1 .. numelems([OR]))
\end{app}
The function numelems(H) outputs the number of elements in the automorphism group of the Steiner system H. It has 432 elements.
The very last line gives the size of the orbits. There are 16 orbits, each representing one isomorphism class. The size of the automorphism group of each oriented Steiner system is   432/``size of orbit". The set CC stores a representative of each isomorphism class.

\begin{app}{\label{app3}}
\normalfont
    Below is Maple code used for Proposition \ref{rankgrowth}. Change the entries in $W$ to obtain different matrices $AW=A_{\w}$. It is easy to check that AW.V is equal to cro(W,V).
\vspace{\baselineskip}
\\
restart;
\\
with(LinearAlgebra):
\\
T := [[1, 2, 3], [1, 4, 5], [1, 7, 6], [2, 4, 6], [2, 5, 7], [3, 4, 7], [3, 6, 5]]:
\\
A := Matrix(7, 7):
\\
for i to 7 do

    A[T[i][1], T[i][2]] := s[T[i][3]]:
    
    A[T[i][2], T[i][3]] := s[T[i][1]]:
    
    A[T[i][3], T[i][1]] := s[T[i][2]]:
    \\
end do:
\\
M := A - Transpose(A):
\\
cro := (v, u) -$>$ (Matrix([[seq(coeff(Trace(((v$^{\%T}$) . M) . u), x[i]), i = 1 .. 7)]]))$^{\%T}$
\\
V := Transpose(Matrix([v[1], v[2], v[3], v[4], v[5], v[6], v[7]])):
\\
W := Transpose(Matrix([1,2,-3,1,1,0,2])):
\\
R := Matrix([V, cro(W, V), cro(W,cro(W, V))]):
\\
Rank(R)
\\
AA := cro(W, V)
\\
AW := Matrix([seq([seq(coeff(AA[k, 1], v[i]), i = 1 .. 7)], k = 1 .. 7)])

\end{app}


\end{document}